\DeclareMathAlphabet{\mathpzc}{OT1}{pzc}{m}{it}
\newtheorem{theorem}{Theorem}[section]
\newtheorem{proposition}[theorem]{Proposition}
\newtheorem{conjecture}[theorem]{Conjecture}
\newtheorem{lemma}[theorem]{Lemma}
\theoremstyle{definition}
\newtheorem{definition}[theorem]{Definition}
\numberwithin{equation}{section}
\theoremstyle{remark}
\newtheorem{remark}[theorem]{Remark}
\def\le{\leqslant}
\newcommand{\CG}{{\mathcal G}}
\newcommand{\CJ}{{\mathcal J}}
\newcommand{\CO}{{\mathcal O}}
\newcommand{\CS}{{\mathcal S}}
\newcommand{\CW}{{\mathcal W}}
\newcommand{\CZ}{{\mathcal Z}}
\newcommand{\SB}{{\mathscr B}}
\newcommand{\SF}{{\mathscr F}}
\newcommand{\SM}{{\mathscr M}}
\newcommand{\fh}{{{\mathfrak h}}}
\newcommand{\fg}{{{\mathfrak g}}} 
\newcommand{\fb}{{{\mathfrak b}}}
\newcommand{\fn}{{{\mathfrak n}}}
\newcommand{\fs}{{{\mathfrak s}}}
\newcommand{\fA}{{{\mathfrak A}}} 
\newcommand{\fC}{{{\mathfrak C}}} 
\newcommand{\fE}{{{\mathfrak E}}}
\newcommand{\fM}{{{\mathfrak M}}}
\newcommand{\fT}{{{\mathfrak T}}} 
\newcommand{\fU}{{{\mathfrak U}}}
\newcommand{\fX}{{{\mathfrak X}}} 
\newcommand{\fY}{{{\mathfrak Y}}}
\newcommand{\fhd}{\fh^\star}
\newcommand{\hfg}{{\widehat\fg}}
\newcommand{\tCO}{\widetilde{\CO}}
\newcommand{\tDV}{\widetilde{\DV}}
\newcommand{\tP}{{\widetilde{P}}}
\newcommand{\tQ}{\widetilde{Q}}
\newcommand{\tS}{{\widetilde{S}}}
\newcommand{\tZ}{{\widetilde{Z}}}
\newcommand{\tDelta}{\widetilde{\Delta}}
\newcommand{\DA}{{\mathbb A}}
\newcommand{\DC}{{\mathbb C}}
\newcommand{\DP}{{\mathbb P}}
\renewcommand{\DH}{{\mathbb H}}
\newcommand{\DZ}{{\mathbb Z}}
\newcommand{\DN}{{\mathbb N}}
\newcommand{\DV}{{\mathbb V}}
\newcommand{\DW}{{\mathbb W}}
\newcommand{\ch}{{\operatorname{ch}\, }}
\newcommand{\Spec}{{\operatorname{Spec}}}
\newcommand{\Hom}{{\operatorname{Hom}}}
\newcommand{\catmod}{{\operatorname{-mod}}}
\newcommand{\Sym}{{\operatorname{Sym}}}
\newcommand{\im}{{\operatorname{im}\,}}
\newcommand{\rk}{{{\operatorname{rk}}}}
\newcommand{\ol}{\overline}
\newcommand{\id}{{\operatorname{id}}}
\newcommand{\re}{{\operatorname{re}}}
\newcommand{\linie}{{\,\text{---\!\!\!---}\,}}
\newcommand{\IC}{{\operatorname{IC}}}
\newcommand{\Grass}{{\operatorname{Grass}}}
\newcommand{\comment}[1]{}
\newcommand{\lgl}{\langle}
\newcommand{\rgl}{\rangle}
\begin{document}

\pagenumbering{arabic}
\title[]{Localization of IC-complexes on  Kashiwara's flag scheme and representations of Kac--Moody algebras} \author[]{Giovanna Carnovale, Francesco Esposito, Peter Fiebig}
\begin{abstract} We study equivariant localization of intersection cohomology complexes on Schubert varieties in Kashiwara's flag manifold. Using moment graph techniques we establish a link to the representation theory of Kac-Moody algebras and give a new proof of the Kazhdan--Lusztig conjecture for blocks containing an antidominant element.
\end{abstract}

\maketitle
\section{Introduction}

We study constructible sheaves, in particular intersection cohomology complexes, on Kashiwara's version $\fX$ of the flag variety associated to a symmetrizable Kac-Moody algebra $\fg$.  We are particularly interested in the connection between the geometry of Schubert varieties in $\fX$ and the structure  of category  $\CO^\vee$  of the Langlands dual Kac-Moody algebra. Note that the theory of coherent sheaves on $\fX$ was studied extensively by, among others, Kashiwara, Shimozono \cite{KS} and Kumar \cite{Ku66}). But the theory of constructible sheaves on $\fX$, in particular with coefficients in prime characteristic, seems not to have been studied extensively yet. 

Kashiwara's flag scheme (sometimes denoted by $G/B^-$) is, in contrast to $G/B$, which  is probably more often in the focus of current research, a scheme and not merely an ind-scheme. It is stratified, but the strata are not of finite type (note that we neglect the case of finite dimensional Kac-Moody algebras, where the two versions coincide). Every Schubert variety contains infinitely many strata. However, every  finite open union of strata is in fact an $\DA^{\infty}$-fibration over a variety of finite type. Hence we can transfer most of the cohomological machinery used in geometric representation theory to these open sets. In particular, we study the localization of the torus  equivariant intersection cohomology complexes on Schubert varieties on the associated moment graph. 

Note that the moment graphs associated to $G/B$ and to $G/B^-$ coincide as labeled graphs, yet they carry  opposite partial orders on their sets of vertices. Now, a version of Soergel's structure functor allows us to relate Verma multiplicities of projective objects in the category $\CO^\vee$ to the ranks of certain sheaves on the moment graph, and hence to the dimension of stalks of certain IC-sheaves on Kashiwara's flag scheme. The case of $G/B$ yields information about blocks in positive level (i.e., blocks that contain a dominant weight), cf. \cite{F07}. In the present paper we show that Kashiwara's version   $G/B^-$ gives us information about blocks in negative level, i.e. blocks  that contain an antidominant element. As the ranks of the restriction of the IC-sheaves  on Schubert varieties in $G/B^-$ are known (by work of Kashiwara and Tanisaki), we obtain a new proof of the Kazhdan--Lusztig conjecture for regular blocks of category $\CO^\vee$ that contain an antidominant weight. 

In the paper  \cite{SVV} the reader can find a similar study of the relation between the geometry of Kashiwara's flag scheme and the representation theory of Kac-Moody algebras in the affine case (with a particular focus on Koszul duality). 
 In contrast to \cite{SVV} we tried to set up the theory in such a way that a large part generalizes to coefficients in positive characteristics. We succeeded only in parts, as  we were unable to prove the existence of equivariant parity sheaves on $\fX$.

\subsection*{Acknowledgements} This research was supported by Project  34672 ``Parity Sheaves on Kashiwara's flag manifold'', funded by  the MIUR-DAAD Joint Mobility Program  2018/2019 and by BIRD179758/17 Project ``Stratifications in algebraic groups, spherical varieties, Kac-Moody algebras and Kac-Moody groups'' funded by the University of Padova. 

\section{Kashiwara's flag scheme}\label{sec-KasflagScheme}

Let $\fg=\fn^-\oplus\fh\oplus\fn$ be a triangularized complex symmetrizable Kac-Moody algebra, $R\subset\fhd$ the set of roots of $\fg$ and $R^+\subset R$ the set of roots of $\fn$. 
For $\alpha\in R$ we denote by $\fg_\alpha\subset\fg$ the corresponding root space. Denote by $\hfg$ the completion of $\fg$ in the positive root direction, i.e.  $\hfg=\fn^-\oplus\fh\oplus\prod_{\alpha\in R^+}\fg_{\alpha}$. Denote by $\CW$ the Weyl group of $\fg$, by $l\colon \CW\to\DN$ the length function (that comes from the choice of simple reflections induced by the above triangular decomposition), and by $\le$ the Bruhat order on $\CW$.

\subsection{(Pro-)unipotent groups}\label{sec:pro_uni}
For a finite dimensional complex nilpotent Lie algebra $\fs$ we denote by $\exp(\fs)$ the associated unipotent group. Recall that $\exp(\fs)=\fs$ as a set and that the group structure is given by the Campbell-Hausdorff formula (cf. \cite[Chap. IV \S 2, no.4]{DG}).

We say that a subset $S$ of $R^+$ is {\em additively closed} if  $(S+S)\cap R^+\subset S$. For an additively closed subset $S$ of $R^+$ the subvectorspace  $\fn_S:=\bigoplus_{\alpha\in S}\fn_\alpha$ is a subalgebra of $\fn$. More generally, if  $S^\prime\subset S$ satisfies $(S+S^\prime)\cap R^+\subset S^\prime$, then $\fn_{S^\prime}$ is an ideal in $\fn_S$. If moreover $S\setminus S^\prime$ is finite, then $\fn_S/\fn_{S^\prime}$ is a finite dimensional nilpotent Lie algebra. So we can define
\begin{equation}\label{eq:limit}
\fU_S:=\varprojlim_{S^\prime} \exp(\fn_S/\fn_{S^\prime}),
\end{equation}
where $S^\prime$ ranges over  cofinite  subsets of $S$ with $(S^\prime+S)\cap R^+\subset S^\prime$. We set $\fU:=\fU_{R^+}$ and we consider $\fU_{S^\prime}$ as a subscheme in $\fU_S$ for cofinite inclusions $S^\prime\subset S$.  .

We have the following natural identifications of $\DC$-schemes 
\begin{align}\label{eq:unipotgrp}
\fU_S=\Spec(\Sym(\bigoplus_{\alpha\in S}\fg_\alpha^\ast))=\prod_{\alpha\in S}\fg_\alpha.
\end{align}
So $\fU_S$ is a prounipotent affine group scheme. 
Denote by $\fT$ the algebraic torus with Lie algebra $\fh$. The action of $\fT$ on $\fn_S$ induces an action  on $\fU_S$. Then the identifications \eqref{eq:unipotgrp} are  compatible with the $\fT$-actions.   Note that the inclusion $\fU_S\subset\fU$ splits and we have
$$
\fU=\fU_S\times \bigoplus_{\alpha\in R^+\setminus S} \fg_{\alpha}.
$$

For a finite subset  $\CJ$ of $\CW$  define  
\begin{align*}
S_\CJ&:=\{\alpha\in R^+\mid {x^{-1}}(\alpha)\in R^+\text{ for all $x\in\CJ$}\}\\
&=R^+\cap\bigcap_{x\in\CJ} x(R^+).
\end{align*} 
Then $S_{\CJ}$ is a cofinite additively closed subset of $R^+$, and for $\CJ^\prime\subset\CJ$ we have a reverse inclusion $S_{\CJ}\subset S_{\CJ^\prime}$. Set   
\begin{align*}
\fU_\CJ&:=\fU_{S_{\CJ}}\\
&=\prod_{\alpha\in R^+\cap\bigcap_{x\in\CJ}x(R^+)}\fg_{\alpha}.
\end{align*}
This  is an algebraic  subgroup scheme of $\fU$ of finite codimension, and for $\CJ^\prime\subset\CJ$ we have $\fU_\CJ\subset\fU_{\CJ^\prime}$. Clearly, $\fU_{\{e\}}=\fU$.

\subsection{Kashiwara's flag scheme}

In \cite{Kas} Kashiwara constructed a separated $\DC$-scheme $\mathfrak X$ of infinite type inside the $\DC$-scheme $\Grass(\hfg)$ of subvector spaces of  $\hfg$ as follows. Denote by $x_0$ the point in $\Grass(\hfg)$ corresponding to the subspace $\fn^-$. The actions of $\fU$ and $\fT$ on $\hfg$ give rise to actions on $\Grass(\hfg)$.  Note that $x_0$ is a $\fT$-fixed point.

\begin{lemma}[{\cite[Lemma 4.4.1]{Kas}}] \label{lemma-emb}The morphism $\fU\to \Grass(\hfg)$, $g\mapsto gx_0$, is an embedding. 
\end{lemma}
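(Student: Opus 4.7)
The plan is to realize the $\fU$-orbit of $x_0$ as a closed subscheme of an open affine piece of $\Grass(\hfg)$ containing $x_0$.

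First, I would exhibit an open affine subscheme of $\Grass(\hfg)$ around $x_0$. Set $W := \fh \oplus \prod_{\alpha \in R^+} \fg_\alpha$, so that $\hfg = \fn^- \oplus W$, and let $\Omega \subset \Grass(\hfg)$ be the open subscheme of subspaces complementary to $W$. Standard Grassmannian considerations identify $\Omega$ with the affine scheme $\Hom(\fn^-, W)$ via $V \mapsto \phi_V$, where $V$ is the graph of $\phi_V : \fn^- \to W$. The base point $x_0$ corresponds to $\phi = 0$.

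Second, I would show that the orbit map $g \mapsto gx_0$ factors through $\Omega$. For $g = \exp(X)$ with $X = \sum_\alpha X_\alpha \in \hfn$ and $y \in \fg_{-\beta} \subset \fn^-$, expand $g \cdot y = \sum_{k \geq 0} \tfrac{1}{k!}(\ad X)^k(y)$; the iterated brackets shift weights further into the positive root cone, which guarantees convergence in $\hfg$. Each term $(\ad X)^k(y)$ has a weight decomposition, and its component in $\fn^-$ has weight $-\beta'$ with $\beta - \beta'$ a sum of $k$ positive roots. For $k=0$ this contribution is $y$ itself, while for $k \geq 1$ the height of $\beta'$ is strictly smaller than that of $\beta$. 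Ordering a weight basis of $\fn^-$ by height, the induced map $\pi_{\fn^-} \circ g|_{\fn^-} : \fn^- \to \fn^-$ is then triangular with identity diagonal, hence invertible; consequently $g\fn^- \cap W = 0$, and a rank count yields $g\fn^- \oplus W = \hfg$, so $g\fn^- \in \Omega$.

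Third, I would check that the induced morphism $\Phi : \fU \to \Omega \cong \Hom(\fn^-, W)$ is a closed immersion. Using the product decomposition $\fU = \prod_{\alpha \in R^+} \fg_\alpha$ from \eqref{eq:unipotgrp} and the analogous weight decomposition of $\Hom(\fn^-, W)$, the differential of $\Phi$ at the identity is the $\fT$-equivariant map $\hfn \to \Hom(\fn^-, W)$, $X \mapsto (y \mapsto \pi_W [X,y])$. This differential is injective: for $X \in \fg_\alpha$ one has $[X, \fg_{-\alpha}] \subset \fh \subset W$, and the non-degeneracy of the invariant form on $\fg$ forces $X = 0$ once the associated linear map vanishes. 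Combined with a recursive argument on the height of roots, matching the components of $\Phi(\exp X)$ against $X_\alpha$ modulo lower-height data, this exhibits $\Phi$ as a closed immersion of pro-affine schemes.

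The main obstacle is the rigorous handling of the pro-structures. The scheme $\Grass(\hfg)$ for the completed Kac-Moody algebra $\hfg$ requires a precise definition (as given in Kashiwara's original construction), and one must ensure that the open cell $\Omega$ carries a compatible pro-affine structure. Moreover, the recursive recovery of $X$ from $\Phi(\exp X)$ must be compatible with the limit presentation \eqref{eq:limit} of $\fU$, i.e., it has to descend to the finite-codimension quotients; this requires the recursion to respect the filtration of $R^+$ by cofinite additively closed subsets.
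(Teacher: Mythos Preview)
The paper does not supply its own proof of this lemma; it merely records the statement with a citation to \cite[Lemma 4.4.1]{Kas} and uses it as input. So there is no argument in the paper to compare your proposal against.

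As for your sketch on its own merits: the strategy is the natural one and is essentially what Kashiwara does---pass to the big cell $\Omega\cong\Hom(\fn^-,W)$ around $x_0$, check that the orbit stays in $\Omega$ via a triangularity/height argument, and then identify the orbit map with a closed embedding by recursively solving for the root components of $X$ from the graph $\Phi(\exp X)$. Two small remarks. First, the phrase ``a rank count yields $g\fn^-\oplus W=\hfg$'' is not literally available in infinite dimensions; what you actually use is that $\pi_{\fn^-}\circ g|_{\fn^-}$ is invertible, which immediately gives both $g\fn^-\cap W=0$ and $g\fn^-+W=\hfg$. Second, your appeal to the invariant form for injectivity of the differential is fine here since the paper assumes symmetrizability, but note that Kashiwara's original argument does not need this hypothesis. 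The genuine technical content, as you correctly flag, lies in making the pro-scheme bookkeeping precise---Kashiwara's definition of $\Grass(\hfg)$ is set up exactly so that the chart $\Omega$ and the recursion are compatible with the cofinite filtrations, and filling that in is where the work is.
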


Kashiwara also constructs an action of the braid group $\widetilde{\CW}$ associated with $R$ on $\Grass(\hfg)$. For $\tilde{w}\in\widetilde{\CW}$ we consider the set $\tilde{w}(\fU x_0)$. Since $\fU x_0$ is $\fT$-stable,  $\tilde{w}(\fU x_0)$ depends only on the Weyl group element  $w$ corresponding to $\tilde w$. So we can define $\fA^w:=w(\fU x_0)$ for any $w\in\CW$. This is a $\fT$-stable subscheme of $\Grass(\hfg)$.

\begin{lemma}\label{lemma-Uacts}
Let $w\in\CW$ and suppose that $\CJ\subset\CW$ is finite and contains $w$.   Then $\fA^w$ is stable under the action of $\fU_{\CJ}$, and $\fU_{\CJ}$ acts freely on $\fA^w$. Moreover, there is a commutative diagram
\begin{align*}
\xymatrix{
\fA^w\ar@{->>}[r]\ar[d]^{\sim}&\fA^w/\fU_{\CJ}\ar[d]^{\sim}\\
\prod_{\alpha\in w(R^+)}\fg_{\alpha}\ar@{->>}[r]&\bigoplus_{\alpha\in w(R^+)\setminus S_{\CJ}}\fg_{\alpha},
}
\end{align*}
of $\fT$-schemes.\end{lemma}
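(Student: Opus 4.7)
The plan is to reduce the statement to the trivial case $w=e$, where the claim boils down to $\fU$ acting on itself by left translation via the orbit map $u\mapsto u x_0$, and then transport the picture through the braid group action by~$w$.

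First I would invoke Lemma~\ref{lemma-emb} together with \eqref{eq:unipotgrp} to identify $\fU x_0$ with $\fU\cong\prod_{\alpha\in R^+}\fg_\alpha$ as $\fT$-schemes. Applying~$w$, via its braid-group lift, then yields an isomorphism
$$
\fA^w=w(\fU x_0)\xrightarrow{\sim}\prod_{\alpha\in w(R^+)}\fg_\alpha
$$
of $\fT$-schemes, where the factor indexed by $\alpha\in w(R^+)$ has $\fT$-weight $\alpha$ because $w$ normalizes~$\fT$. This produces the left vertical isomorphism in the diagram.

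Next, since $w\in\CJ$, the definition of $S_\CJ$ yields $w^{-1}(S_\CJ)\subset R^+$, so conjugation by $w$ carries $\fU_\CJ$ into a subgroup scheme $w^{-1}\fU_\CJ w\subset\fU$. Consequently
$$
\fU_\CJ\cdot\fA^w=w\cdot(w^{-1}\fU_\CJ w)\cdot\fU x_0\subset w(\fU x_0)=\fA^w,
$$
which proves stability. Transporting through the previous step, the $\fU_\CJ$-action on $\fA^w$ becomes the left-translation action of $w^{-1}\fU_\CJ w$ on $\fU$, hence is free. The quotient $\fU/(w^{-1}\fU_\CJ w)$ identifies, as a $\fT$-scheme, with $\prod_{\alpha\in R^+\setminus w^{-1}(S_\CJ)}\fg_\alpha$, and transporting by $w$ gives $\bigoplus_{\alpha\in w(R^+)\setminus S_\CJ}\fg_\alpha$; the bottom arrow is then the evident linear projection and commutativity is built in.

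The main obstacle is the careful bookkeeping with Kashiwara's braid group action on $\Grass(\hfg)$: one has to extract from \cite{Kas} the equivariance $w\cdot(v\cdot y)=(wvw^{-1})\cdot(w\cdot y)$ whenever $v$ lies in a subgroup of $\fU$ that is stabilized by $w$-conjugation (as is $\fU_\CJ$ here), so that the manipulation in the second step is legitimate. A secondary technical point is to verify scheme-theoretically, not merely set-theoretically, the product decomposition $\fU\cong(w^{-1}\fU_\CJ w)\times\prod_{\alpha\in R^+\setminus w^{-1}(S_\CJ)}\fg_\alpha$ in the pro-unipotent setting; this is straightforward because $w^{-1}(S_\CJ)$ is cofinite in $R^+$ and the splitting already exists at the level of root-space decompositions.
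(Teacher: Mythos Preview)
Your proposal is correct and follows essentially the same route as the paper: identify $\fA^e\cong\fU$ via Lemma~\ref{lemma-emb}, transport by the braid-group element $w$ to obtain $\fA^w\cong\prod_{\alpha\in w(R^+)}\fg_\alpha$, and then use that $\fU_\CJ$ is contained in the intuitive subgroup ``$\fU\cap w\fU w^{-1}$'' (i.e.\ $\fU_{S_{\{w\}}}$) because $w\in\CJ$. The paper phrases this last step by staying on the $w$-side and invoking $\fU_\CJ\subset\fU_{S_{\{w\}}}$, while you conjugate back to the identity via $w^{-1}\fU_\CJ w\subset\fU$; these are the same argument, and your explicit flagging of the equivariance input needed from \cite{Kas} is exactly what the paper leaves implicit.
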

 
\begin{proof} Note that  $S_{\CJ}\subset w(R^+)$ since $w\in\CJ$, and the complement is finite, as $\CJ$ is finite.
 For $w=e$,  the action of $\fU$ on $\fA^e$ is  principal homogeneous by Lemma \ref{lemma-emb}. We hence have an identification $\fA^e\cong\prod_{\alpha\in R^+}\fg_{\alpha}$ of $\fT$-schemes. For any $w\in\CW$ we hence obtain an identification $\fA^w\cong\prod_{\alpha\in w(R^+)}\fg_\alpha$ of $\fT$-schemes. Moreover, the  ``subgroup''  $\fU\cap w\fU w^{-1}$ leaves $\fA^w=w\fA^e$ stable and acts freely. Since we haven't defined a Kac--Moody group containing $\fU$, the notation $\fU\cap w\fU w^{-1}$ is only an intuitive notation for the subgroup $\fU_{S_{\{w\}}}$ of $\fU$. But this subgroup contains  $\fU_\CJ$ since $\CJ$ contains $w$. As $\fU_{\CJ}=\prod_{\alpha\in R^+\cap\bigcap_{x\in\CJ}x(R^+)}\fg_{\alpha}$, we obtain an identification $\fA^w/\fU_{\CJ}\cong\bigoplus_{\alpha\in w(R^+)\setminus S_{\CJ}}\fg_\alpha$ as claimed. The identifications thus obtained clearly fit into the above commutative diagram. 
\end{proof}

Kashiwara defines the flag variety in \cite[Definition 4.5.6]{Kas} as 
$$
\mathfrak X:=\bigcup_{w\in\CW} \fA^w.
$$
Each $\fA^w$ is an open affine subscheme in $\mathfrak X$.

\subsection{Schubert varieties}

We set $\mathfrak B:=\fT\ltimes \fU$. For $w\in\CW$ define
$$
\mathfrak  C^w:={\mathfrak B}wx_0\subset \mathfrak X
$$
and 
$$
\mathfrak X^w:=\ol{\fC^w}\subset\mathfrak X. 
$$
$\mathfrak  C^w$  is a locally closed subscheme  of $\mathfrak X$ and 
 $\mathfrak X^w$ is a closed subvariety of $\mathfrak X$  of  codimension $l(w)$ in $\mathfrak X$ (cf. \cite[Corollary 4.5.8]{Kas}).
\begin{proposition}\label{prop-quotBruhatcells} Let $w\in\CW$.
\begin{enumerate}
\item We have  $\fC^w\subset\fA^w$ and $\ol{\fC^w}=\bigsqcup_{w\le w^\prime} \fC^{w^\prime}$.
\item There is a commutative diagram of  $\fT$-schemes
\begin{align*}
\xymatrix{
\fC^w\ar@{^{(}->}[r]\ar[d]^{\sim}&\fA^w\ar[d]^{\sim}\\
\prod_{\alpha\in R^+\cap w(R^+)}\fg_{\alpha}\ar@{^{(}->}[r]&\prod_{\alpha\in w(R^+)}\fg_{\alpha}.
}
\end{align*}
\item Suppose that $\CJ$ is a finite subset in $\CW$ that contains $w$. Then there is a commutative diagram of $\fT$-schemes
\begin{align*}
\xymatrix{
\fC^w/\fU_{\CJ}\ar@{^{(}->}[r]\ar[d]^{\sim}&\fA^w/\fU_{\CJ}\ar[d]^{\sim}\\
\bigoplus_{\alpha\in R^+\cap(w(R^+)\setminus S_{\CJ})}\fg_{\alpha}\ar@{^{(}->}[r]&\bigoplus_{\alpha\in w(R^+)\setminus S_{\CJ}}\fg_{\alpha}.
}
\end{align*}
\end{enumerate}
\end{proposition}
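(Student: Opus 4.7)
For part (1), the closure relation $\overline{\fC^w}=\bigsqcup_{w\le w^\prime}\fC^{w^\prime}$ (with the Bruhat order reversed relative to the usual $G/B$ convention) is the stratification established by Kashiwara in \cite{Kas}, and I would simply cite it. For the inclusion $\fC^w\subset\fA^w$, I would first observe that the point $wx_0\in\Grass(\hfg)$ corresponds to the $\fT$-stable subspace $w(\fn^-)=\bigoplus_{\alpha\in -w(R^+)}\fg_\alpha$ and is therefore $\fT$-fixed; consequently $\fC^w=\fB wx_0=\fU wx_0$. Next I would partition
\[
R^+=\bigl(R^+\cap w(R^+)\bigr)\sqcup\bigl(R^+\cap w(-R^+)\bigr),
\]
note that both pieces are additively closed (so they define subgroup schemes $A:=\fU_{R^+\cap w(R^+)}$ and $B:=\fU_{R^+\cap w(-R^+)}$ of $\fU$), and invoke the Campbell--Hausdorff formula to get a scheme isomorphism $A\times B\xrightarrow{\sim}\fU$. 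The braid-group action of $w$ on $\Grass(\hfg)$ permutes root subgroups via the permutation of roots; the ``conjugate'' $w^{-1}Bw$ corresponds to the root set $w^{-1}(R^+\cap w(-R^+))\subset -R^+$, which sits in $\exp(\fn^-)$ and therefore fixes $x_0$. Hence $Bwx_0=wx_0$, so
\[
\fC^w=Awx_0=w\cdot(w^{-1}Aw)\,x_0\subset w\fU x_0=\fA^w.
\]

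For part (2), the same computation gives $\fC^w=w\cdot(w^{-1}Aw)x_0$, and $w^{-1}Aw$ corresponds to the root set $w^{-1}(R^+\cap w(R^+))=R^+\cap w^{-1}(R^+)\subset R^+$, i.e.\ it is genuinely a subgroup scheme of $\fU$. Under the $\fT$-equivariant isomorphism $\fU\xrightarrow{\sim}\fA^e$ of Lemma \ref{lemma-emb} this subscheme is identified with $\prod_{\alpha\in R^+\cap w^{-1}(R^+)}\fg_\alpha$. Applying the automorphism $w$ and using the $\fT$-equivariant identification $\fA^w\cong\prod_{\alpha\in w(R^+)}\fg_\alpha$ from Lemma \ref{lemma-Uacts}, the index set gets transported by $w$, yielding $\fC^w\cong\prod_{\alpha\in w(R^+)\cap R^+}\fg_\alpha$. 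All maps involved are $\fT$-equivariant by the compatibility stated after \eqref{eq:unipotgrp}, so the square commutes.

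Part (3) I would then deduce formally from (2) together with Lemma \ref{lemma-Uacts}. Since $w\in\CJ$, one has $S_\CJ\subset R^+\cap w(R^+)$, so $\fU_\CJ\subset A$ acts freely on $\fA^w$ and preserves $\fC^w$. Dividing both sides of the diagram in (2) by this free action kills exactly the factors indexed by $S_\CJ$, producing the stated diagram with the indexing sets $R^+\cap(w(R^+)\setminus S_\CJ)$ and $w(R^+)\setminus S_\CJ$.

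The one delicate point is the ``conjugation'' $w^{-1}Bw$, since no Kac--Moody group containing both $\fU$ and a lift of $w$ has been introduced. I would handle this by working entirely inside $\Grass(\hfg)$: $w$ acts there as an automorphism (via Kashiwara's braid-group action, \cite[\S4]{Kas}), and for each root subgroup $\fU_\alpha\subset\fU$ the conjugated action $v\mapsto w^{-1}vw$ on $\Grass(\hfg)$ is by construction the action of the one-parameter subgroup attached to $w^{-1}(\alpha)$, which lies in $\exp(\fn^-)$ when $w^{-1}(\alpha)$ is negative and therefore stabilizes $x_0$. Once this bookkeeping is in place, the rest is routine manipulation of products of root spaces.
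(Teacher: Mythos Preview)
Your proposal is correct and follows essentially the same route as the paper. The paper's own proof is extremely brief: for (1) and (2) it simply cites Kashiwara \cite[Lemma 4.5.7, Proposition 4.5.11]{Kas}, and for (3) it invokes (2), Lemma~\ref{lemma-Uacts}, and the $\fU$-stability of $\fC^w$. Your argument for (3) is identical to the paper's; for (1) and (2) you have unpacked the content of Kashiwara's Lemma 4.5.7 explicitly (the decomposition $\fU=A\cdot B$ with $B$ fixing $wx_0$ via the braid action), which is exactly the mechanism behind the cited result, so there is no genuine divergence in strategy---only in level of detail.
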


\begin{proof} Statement (1) is \cite[Lemma 4.5.7]{Kas} and  \cite[Proposition 4.5.11]{Kas}. Statement (2) is again \cite[Lemma 4.5.7]{Kas}  (but note that Kashiwara states a non-$\fT$-equivariant version in loc.cit.). Statement (3) follows from (2) and Lemma \ref{lemma-Uacts} and the fact that $\fC^w$ is stable under the action of $\fU$, so in particular under the action of $\fU_\CJ$.  
\end{proof}

\subsection{Approximations of Schubert varieties and finite dimensional quotients}
One way to understand the geometry of the Schubert varieties $\fX^w$ is to study open subschemes that are fibrations over a finite dimensional, complex, separated scheme. In order to do this, we need the following definition.

\begin{definition} We say that $\CJ\subset \CW$ is  {\em open} if $y\le x$ and $x\in\CJ$ imply $y\in\CJ$.
\end{definition}

Let  $\CJ$ be a finite open subset   of $\CW$. We define $\mathfrak X(\CJ):=\bigcup_{w\in\CJ} \fA^w$. This is an open subscheme of $\mathfrak X$. By Lemma \ref{lemma-Uacts} it is acted upon by $\fU_{\CJ}$. We have $\mathfrak X(\CJ)\subset\mathfrak X(\CJ^\prime)$ for $\CJ\subset\CJ^\prime$, and $\mathfrak X$ is the union of all $\mathfrak X(\CJ)$ with $\CJ$ ranging over finite open subsets of $\CW$.
For $w\in\CJ$ we set
$$
\mathfrak X^w(\CJ):=\mathfrak X^w\cap\mathfrak X(\CJ).
$$
Then $\mathfrak X^w=\bigcup_{\CJ}\mathfrak X^w(\CJ)$, where $\CJ$ ranges over all finite open subsets of $\CW$. This is an open covering of $\mathfrak X^w$.
 
\begin{proposition}\label{prop:fibration} Let $\CJ,\CJ'\subset\CW$ be finite open subsets with $\CJ\subseteq\CJ'$ and suppose that  $w\in\CJ$.
\begin{enumerate}
\item\label{item:uno} The group $\fU_{\CJ^\prime}$ acts freely on $\mathfrak X^w(\CJ)$ and the quotient ${\mathfrak X}^w(\CJ)/\fU_{\CJ^\prime}$ is a separated $\DC$-scheme of finite type. 
\item\label{item:due} The canonical map $\pi^{\CJ}_{\CJ'}\colon\mathfrak X^w(\CJ)\to {\mathfrak X}^w(\CJ)/\fU_{\CJ^\prime}$ is an $\DA^{\infty}$-fibration. 
\item\label{item:tre} The images $\pi_{\CJ'}^{\CJ}(\fC^x)=\fC^x/\fU_{\CJ^\prime}$ for $w\le x$ and  $x\in\CJ$ yield a stratification of ${\mathfrak X}^w(\CJ)/\fU_{\CJ^\prime}$.
\item\label{item:cinque} The map  $\pi_{\CJ'}^{\CJ}$  is a ${\fT}$-equivariant fibration on strata and we have a commutative diagram of $\fT$-schemes
\begin{align*}
\xymatrix{
\fC^x\ar@{->>}[r]\ar[d]^{\sim}&\fC^x/\fU_{\CJ^\prime} \ar[d]^{\sim}\\
 \prod_{
\alpha\in R^+\cap x(R^+)}\mathfrak g_\alpha
\ar@{->>}[r]&\bigoplus_{\alpha\in R^+\cap(x(R^+)\setminus S_{\CJ^\prime})} \mathfrak g_\alpha.
}
\end{align*}
for each stratum. In particular, $\fC^x/\fU_{\CJ^\prime}$ is an affine space of finite dimension. 
\end{enumerate}
\end{proposition}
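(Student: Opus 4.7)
The plan is to reduce every claim to the local description given by Lemma~\ref{lemma-Uacts}, exploiting the open cover
\[
\mathfrak X^w(\CJ)=\bigcup_{y\in\CJ}\bigl(\fA^y\cap \mathfrak X^w(\CJ)\bigr).
\]
Since $\CJ\subseteq\CJ'$ one has $S_{\CJ'}\subseteq S_{\CJ}$ and hence $\fU_{\CJ'}\subseteq \fU_{\CJ}$; for every $y\in\CJ\subseteq\CJ'$ Lemma~\ref{lemma-Uacts} (applied with $\CJ'$ in place of $\CJ$) asserts that $\fU_{\CJ'}$ acts freely on $\fA^y$ and provides a canonical $\fT$-equivariant isomorphism
\[
\fA^y/\fU_{\CJ'}\;\cong\;\bigoplus_{\alpha\in y(R^+)\setminus S_{\CJ'}}\fg_\alpha.
\]
Since $y(R^+)\cap R^-$ has cardinality $l(y)$ and $R^+\setminus S_{\CJ'}=\bigcup_{x\in\CJ'}(R^+\cap x(R^-))$ is a finite union of finite sets, the set $y(R^+)\setminus S_{\CJ'}$ is finite and each local quotient is a \emph{finite-dimensional} affine space.

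For parts (1) and (2) I would glue the affine varieties $(\fA^y\cap\mathfrak X^w(\CJ))/\fU_{\CJ'}$ along the images of the overlaps $\fA^y\cap\fA^{y'}\cap\mathfrak X^w(\CJ)$: each overlap quotient is a locally closed subscheme of a finite-dimensional affine space, and the gluing cocycle is algebraic, being induced by Weyl-group translation intertwined with multiplication in $\fU_{\CJ'}$; the result is a scheme of finite type, and freeness has already been established locally. Separatedness can be verified either by the closed-orbit criterion for the $\fU_{\CJ'}$-action on $\mathfrak X^w(\CJ)\times\mathfrak X^w(\CJ)$, or by invoking the separatedness of Kashiwara's $\mathfrak X$ together with the fact that separatedness descends along open surjective maps that are locally trivial with connected affine fibers. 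For part (2), the explicit decomposition $\fA^y\cong \fU_{\CJ'}\times(\fA^y/\fU_{\CJ'})$ derived from Lemma~\ref{lemma-Uacts}, combined with the identification $\fU_{\CJ'}\cong\prod_{\alpha\in S_{\CJ'}}\fg_\alpha\cong\DA^{\infty}$ (genuinely infinite dimensional under the running hypothesis on $\fg$), exhibits $\pi^{\CJ}_{\CJ'}$ locally as a trivial $\DA^{\infty}$-bundle.

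For part (3), Proposition~\ref{prop-quotBruhatcells}(1) already gives the stratification $\mathfrak X^w=\bigsqcup_{x\ge w}\fC^x$; intersecting with $\mathfrak X(\CJ)$ and using $\fC^x\subset \fA^x$ restricts it to $\mathfrak X^w(\CJ)=\bigsqcup_{x\in\CJ,\,x\ge w}\fC^x$. Each cell is $\fU$-stable (hence $\fU_{\CJ'}$-stable) and lies in $\fA^x$, on which $\fU_{\CJ'}$ acts freely, so the images $\fC^x/\fU_{\CJ'}$ are well-defined locally closed subschemes stratifying the quotient. For part (4), $\fT$-equivariance of $\pi^{\CJ}_{\CJ'}$ on strata is inherited from the $\fT$-equivariant statements in Lemma~\ref{lemma-Uacts} and Proposition~\ref{prop-quotBruhatcells}, and the commutative diagram is precisely Proposition~\ref{prop-quotBruhatcells}(3) applied with $\CJ$ replaced by $\CJ'$ (legitimate since $x\in\CJ\subseteq\CJ'$); finite-dimensionality of the quotient cell was already verified in the first paragraph.

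The main obstacle I expect is verifying the separatedness of the quotient and the globality of the $\DA^{\infty}$-fibration: although each chart $\fA^y$ carries an explicit trivialization, the fiber $\fU_{\CJ'}$ is a prounipotent group scheme of \emph{infinite} dimension, so one must check carefully that the local trivializations glue into a genuine Zariski-locally trivial $\DA^{\infty}$-fibration and that separatedness descends from $\mathfrak X$. The saving grace is that the transition maps between the charts $\fA^y$ arise from braid-group elements acting on $\Grass(\hfg)$ and from multiplication in $\fU_{\CJ'}$, both of which are compatible with the product decompositions of Lemma~\ref{lemma-Uacts}, so the gluing data are morphisms of schemes in a precise sense.
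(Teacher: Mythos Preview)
Your local-chart reduction via Lemma~\ref{lemma-Uacts} and Proposition~\ref{prop-quotBruhatcells} is exactly how the paper proceeds for freeness, finite type, the $\DA^\infty$-fibration, the stratification, and part~(4); the paper simply cites \cite[Lemma 6.1]{Ku66} for these steps and your write-up is essentially what that reference contains. So for everything except separatedness your proposal and the paper agree.

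The real content of the paper's proof is the separatedness of the quotient, and here your proposal has a genuine gap. Your second suggestion, that separatedness descends from $\mathfrak X^w(\CJ)$ to the quotient along a locally trivial $\DA^\infty$-fibration, is \emph{false} in general. A minimal counterexample: glue two copies of $\DA^2$ with coordinates $(x_i,y)$ along $\{y\neq 0\}$ by $x_2=x_1+y^{-1}$; the resulting scheme is separated (check the valuative criterion: if $y\to 0$ with $x_1$ bounded then $x_2$ blows up, and vice versa), carries a free $\DG_a$-action by translation in $x_i$, the quotient map is a Zariski-locally trivial $\DA^1$-bundle, yet the quotient is the affine line with doubled origin. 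Your first suggestion (the closed-graph criterion) is a valid characterisation, but you give no indication of how to verify it here, and with an infinite-dimensional $\fU_{\CJ'}$ this is not routine.

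The paper's argument is different and concrete: to show that the affine opens $(\fA^{w_1}\cap\mathfrak X^w(\CJ))/\fU_{\CJ'}$ glue separatedly, one needs, for each pair $w_1,w_2\in\CJ$, a $\fU_{\CJ'}$-invariant regular function on $\fA^{w_1}\cap\mathfrak X^w(\CJ)$ whose nonvanishing locus is exactly the overlap with $\fA^{w_2}$ (plus the usual compatibility on the other chart). Kashiwara already produced such a function $\varphi$ in \cite[(2.2.4)]{Kas} for the separatedness of $\mathfrak X$ itself: it is the determinant of a natural linear map between the finite-dimensional spaces $w_1\hat\fn/(w_1\hat\fn\cap w_2\hat\fn)$ and $w_2\hat\fn/(w_1\hat\fn\cap w_2\hat\fn)$. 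The new observation is that any $u\in\fU_{\CJ'}$ stabilises both $w_1\hat\fn$ and $w_2\hat\fn$ (since $w_1,w_2\in\CJ'$) and acts \emph{unipotently} on those finite-dimensional quotients, so pre- and post-composing $\varphi$ with these unipotent maps leaves the determinant unchanged. Thus Kashiwara's separating function is automatically $\fU_{\CJ'}$-invariant and descends to the quotient. This is the missing idea in your proposal.
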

\begin{proof}
The proof of \cite[Lemma 6.1]{Ku66} shows that the $\fU_{\CJ^\prime}$-action is free, 
that  $\mathfrak X^w(\CJ)/\fU_{\CJ^\prime}$ is of finite type and \eqref{item:due}. Claim \eqref{item:tre} follows from \eqref{item:uno} and \eqref{item:due}. 

We prove separability of 
${\mathfrak X}^w(\CJ)/\fU_{\CJ^\prime}$ by showing that, for any $w_1,w_2\in\CJ$ there exists a  $\fU_{\CJ^\prime}$-invariant regular function $f$ on $\mathfrak X^w(\CJ)\cap \fA^{w_1}$
such that:
\begin{enumerate}
\item[(a)]\label{item:cond1}$\mathfrak X^w(\CJ)\cap \fA^{w_1}\cap \fA^{w_2}=\{x\in  \mathfrak X^w(\CJ)\cap\fA^{w_1}~|~f(x)\neq0\}$;
\item[(b)]\label{item:cond2} there exists a regular function $g$ on $\mathfrak X^w(\CJ)\cap \fA^{w_2}$  such that $g=0$ on the complement of ${\mathfrak X^w(\CJ)\cap \fA^{w_1}\cap \fA^{w_2}}$ and
$gf|_{\mathfrak X^w(\CJ)\cap \fA^{w_1}\cap \fA^{w_2}}=1$.
\end{enumerate} 
By \cite[Corollary 4.5.5]{Kas} we have  $\mathfrak{X}\cap \Grass_{\tau\hat{\fn}}(\fg)=\fA^\tau$ for any $\tau\in \CW$, where $\Grass_{\tau\hat{\fn}}(\fg)$ is the subscheme defined in  \cite[(2.2.2)]{Kas}.
The restriction $\varphi$  to $\mathfrak X^w(\CJ)\cap \fA^{w_1}$ of the function $f$ constructed in the proof of \cite[(2.2.4)]{Kas} satisfies (a) and (b) and, with the identification $\mathfrak{X}\cap \Grass_{w_1\hat{\fn}}(\fg)=\fA^{w_1}$,  it is  the determinant of a natural linear map $\psi$  between the finite-dimensional spaces $w_1\hat{\fn}/(w_1\hat{\fn}\cap w_2\hat{\fn})$ and $w_2\hat{\fn}/(w_1\hat{\fn}\cap w_2\hat{\fn})$. 
Any $u\in\fU_{\CJ}$ preserves $w_1\hat{\fn}$ and $w_2\hat{\fn}$,  and acts as a unipotent linear map on  $w_1\hat{\fn}/(w_1\hat{\fn}\cap w_2\hat{\fn})$ and $w_2\hat{\fn}/(w_1\hat{\fn}\cap w_2\hat{\fn})$. Then $u\cdot \varphi$ is the determinant of the composition of $\psi$ with two unipotent maps, so the regular function  $\varphi$ is also $\fU_{\CJ}$-invariant.

Finally, \eqref{item:cinque} is readily seen from the identifications in Proposition \ref{prop-quotBruhatcells}. 
 \end{proof}
For a finite open subset $\CJ$ with $w\in\CJ$
\subsection{The $1$-skeleton of the torus action}
Let $\CJ\subset\CW$ be finite and open. 
 \begin{proposition}\label{prop-skeleton}
 \begin{enumerate}
 \item Let $x\in\CJ$. Then $(\fC^x/\fU_{\CJ})^{\fT}=\{xx_0\fU_\CJ\}$. 
 \item Let $\fE\subset\fX^w(\CJ)/\fU_{\CJ}$ be a one-dimensional $\fT$-orbit. Then there is a real root $\alpha\in R^+$ such that  $\fE$ identifies with $\fg_{\alpha}^\times$ as a $\fT$-scheme. Moreoever, $\ol \fE=\fE\cup\{xx_0,s_{\alpha}xx_0\}$ for some $x\in\CW$.
 \end{enumerate} 
\end{proposition}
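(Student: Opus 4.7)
The plan is to reduce both assertions to concrete linear algebra on the root-space decomposition of each stratum. By Propositions \ref{prop-quotBruhatcells}(3) and \ref{prop:fibration}(5), the stratum $\fC^x/\fU_{\CJ}$ is $\fT$-equivariantly identified with $\bigoplus_\alpha \fg_\alpha$, where $\alpha$ runs over $R^+ \cap (x(R^+) \setminus S_{\CJ})$. My first step will be to observe that only \emph{real} roots appear in this indexing set: in a symmetrizable Kac--Moody algebra the set of positive imaginary roots is $\CW$-stable, so any positive imaginary $\beta$ satisfies $y^{-1}(\beta) \in R^+$ for every $y \in \CJ$, and hence $\beta \in S_{\CJ}$. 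Consequently each summand $\fg_\alpha$ is one-dimensional and $\fT$ acts on it through the non-trivial character $\alpha$.

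Part (1) is then immediate: $\fg_\alpha^{\fT} = 0$ for every real root $\alpha$, so the $\fT$-fixed locus of the direct sum is just the origin. I then identify this origin with $xx_0 \fU_{\CJ}$ by noting that $xx_0 \in \fA^x$ is the unique $\fT$-fixed point of $\fA^x$ (it corresponds to $0$ under the $\fT$-equivariant identification of Lemma \ref{lemma-Uacts}) and lies in $\fC^x$.

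For part (2), let $\fE \subset \fX^w(\CJ)/\fU_{\CJ}$ be a one-dimensional $\fT$-orbit. Because the stratification is $\fT$-invariant, $\fE$ is contained in a unique stratum $\fC^x/\fU_{\CJ} \cong \bigoplus_\alpha \fg_\alpha$. Writing a representative point as $v = \sum v_\alpha$, its $\fT$-stabilizer equals $\bigcap_{v_\alpha \neq 0} \ker\alpha$, whose codimension in $\fT$ equals the rank of the sublattice of the character lattice of $\fT$ spanned by those $\alpha$. One-dimensionality of $\fE$ forces this rank to be one, so the contributing roots are pairwise proportional; since distinct positive real roots in a symmetrizable Kac--Moody algebra are never proportional, exactly one $\alpha$ contributes, and because $\dim \fg_\alpha = 1$ the orbit $\fE$ coincides with $\fg_\alpha^\times$ as a $\fT$-scheme.

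For the closure assertion I will use the copy of $\fsl_2$ in $\fg$ associated to the real root $\alpha$: the corresponding $\fT$-invariant projective line in $\fX$ passes through $xx_0$, has $\fT$-fixed points $xx_0$ and $s_\alpha x x_0$, and its big cell identifies $\fT$-equivariantly with $\fg_\alpha^\times \subset \fC^x$. Descending to the $\fU_{\CJ}$-quotient and comparing the chart $\fA^x/\fU_{\CJ}$ with the chart $\fA^{s_\alpha x}/\fU_{\CJ}$, the asymptotic direction of $\fg_\alpha^\times$ in the first chart becomes the origin of the second, yielding $\ol{\fE} = \fE \cup \{xx_0, s_\alpha x x_0\}$. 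The main obstacle I anticipate is carrying out this chart change rigorously: since we do not have an ambient Kac--Moody group containing both $\fU$ and its conjugates at our disposal, the gluing of the two charts on their overlap $\fA^x \cap \fA^{s_\alpha x}$ must be produced directly, most likely by invoking Kashiwara's braid group action on $\Grass(\hfg)$ together with the $\SL_2$-triple attached to $\alpha$.
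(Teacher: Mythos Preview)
Your proposal is correct and follows essentially the same route as the paper. Both arguments reduce to the root-space description of the strata from Proposition~\ref{prop:fibration}\eqref{item:cinque}, establish that the relevant $\alpha$ is real via the $\CW$-stability of positive imaginary roots (the paper cites \cite[Proposition~5.2]{Kac} for this), and handle the closure statement by invoking the action of $s_\alpha$ coming from Kashiwara's braid group action on $\Grass(\hfg)$ together with the $\exp(\fg_\alpha)$-action---precisely the tool you anticipated needing for the chart change. One small remark: your realness argument in part~(1) is not actually needed there, since any nonzero root (real or imaginary) already gives a nontrivial $\fT$-character on $\fg_\alpha$; it only becomes essential in part~(2), where you need $\dim\fg_\alpha=1$ and the existence of $s_\alpha$.
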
 
\begin{proof} Note that (1) follows readily from Proposition \ref{prop:fibration}, \eqref{item:cinque}. In order to prove (2), suppose that $\fE$ is contained in $\fC^x/\fU_\CJ$. Again, Proposition \ref{prop:fibration}, \eqref{item:cinque} implies that there is $\alpha\in R^+\cap (x(R^+)\setminus S_\CJ)$ such that $\fE\cong \fg_{\alpha}^\times$ as a $\fT$-scheme. By definition of $S_\CJ$, we have $w^{-1}(\alpha)\in R^-$ for some $w\in\CJ$. 
This implies that $\alpha$ is real (cf. \cite[Proposition 5.2]{Kac}). Moreover, the unique $\fT$-fixed point $xx_0\fU_\CJ$ is in the closure of $\fE$, and $\fE\cup\{xx_0\fU_\CJ\}$ is an $\exp(\fg_\alpha)$-orbit. Hence $s_{\alpha}xx_0$ is contained in the closure of $\fE$ as well. Under the isomorphism $\fC^x/\fU_\CJ\cong \bigoplus_{\alpha\in R^+\cap(x(R^+)\setminus S_\CJ}\fg_\alpha$, the orbit $\fE$  is mapped to 
$\fg_{\alpha}^\times$. The action of $s_\alpha$ ``inverts''  this orbit and maps the fixed point $xx_0\fU_\CJ$ to $s_\alpha xx_0 \fU_\CJ$, which is contained in $\fC^{s_\alpha x}/\fU_\CJ$. But $\fg_\alpha^\times\cup\{0,\infty\}=\DP_1$ is closed.
\end{proof}

Observe that the assumption that $\fg$ is symmetrizable is needed for the last two statements.

\subsection{IC-complexes}\label{sec:conventions}
 For a $\DC$-scheme $\mathfrak S$ we denote by $S$ its set of $\DC$-points, endowed with the coarsest topology  for which all regular functions $f\colon V\to \DC$ are continuous when $\DC$ is endowed with the analytic topology, and $V$ is the set of $\DC$-points of an affine open subscheme $\mathfrak V$ of $\mathfrak S$. In this way, we obtain a topological space $X$ from Kashiwara's flag scheme $\mathfrak X$. We  also obtain spaces $X^w$, $X^w(\CJ)$, $C^w$,  etc. from $\mathfrak X^w$,  $\fX(\CJ)$, $\fC^w$, etc. They are acted upon by various topological groups $U$, $B$, $T$, etc.  corresponding to $\fU$, $\mathfrak B$,  $\fT$, etc..

Let $\CJ$ be an open and finite subset of $\CW$ and $w\in\CJ$. Denote by $\IC_{\CJ,w}$ the IC-sheaf on $X^w(\CJ)/U_\CJ$ with complex coefficients. 

Let $i_x$ be the inclusion of the fixed point $xx_0 U_\CJ$ into $X^w(\CJ)/U_{\CJ}$. 

\begin{theorem}\label{thm-KLforIC} For $x\in\CJ$ with $w\le x$ we have
$$
\DH^{2j+1}(i_x^\ast\IC_{\CJ,w})=0
\textrm{ for every j and }\sum_{j\in\DZ}\dim \DH^{2j}(i_x^\ast\IC_{\CJ,w})q^j=Q_{x,w}(q),
$$
where $Q_{x,w}\in\DZ[q]$ is the inverse Kazhdan--Lusztig polynomial associated with $x,w\in\CW$. 
\end{theorem}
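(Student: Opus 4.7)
The plan is to reduce to the finite-dimensional quotients of Proposition \ref{prop:fibration} and then to invoke the Kashiwara--Tanisaki computation, which is precisely the content of the theorem as formulated in their original setting.

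First I would replace the IC-sheaf $\IC_{\CJ,w}$ on $X^w(\CJ)/U_{\CJ}$ by the IC-sheaf on the sub-quotient $X^w(\CJ)/U_{\CJ'}$ for a sufficiently large $\CJ'\supset\CJ$ (it suffices to take any finite open $\CJ'$ containing the Bruhat interval between $w$ and $x$). By Proposition \ref{prop:fibration} the map $\pi^\CJ_{\CJ'}$ is an $\DA^\infty$-fibration; pulling back along it identifies IC-complexes on the two quotients up to a cohomological shift, and in particular preserves the dimensions of the stalks at the $T$-fixed point $xx_0 U_{\CJ'}$, which by Proposition \ref{prop-skeleton}(1) is the unique $T$-fixed point of the stratum $C^x/U_{\CJ'}$. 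This places us in the classical setting of an IC-sheaf on a complex separated scheme of finite type stratified by affine cells, with closure relations controlled by the \emph{opposite} Bruhat order on $\CW$ as in Proposition \ref{prop-quotBruhatcells}(1).

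Next, I would construct a Bott--Samelson-type resolution of $X^w(\CJ)/U_{\CJ'}$ via iterated $\DP^1$-fibrations indexed by a reduced expression for $w$; the relevant minimal parabolic actions descend to the quotient because they commute with the action of $U_{\CJ'}$ and are compatible with the $T$-structure of Proposition \ref{prop:fibration}(5). Applying the decomposition theorem to this resolution yields pointwise purity of $\IC_{\CJ,w}$ and in particular the odd-vanishing assertion $\DH^{2j+1}(i_x^\ast \IC_{\CJ,w}) = 0$. Combined with the one-skeleton information of Proposition \ref{prop-skeleton}(2), this setup produces a recursion for the Poincar\'e polynomial $\sum_j \dim \DH^{2j}(i_x^\ast \IC_{\CJ,w}) q^j$ that matches the Hecke-algebra recursion defining Kazhdan--Lusztig polynomials, except that the roles of ``higher'' and ``lower'' in the Bruhat order are interchanged. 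The unique solutions of this reversed recursion are the inverse Kazhdan--Lusztig polynomials $Q_{x,w}(q)$, giving the desired identity.

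The main obstacle is to verify that the Kashiwara--Tanisaki calculation, originally carried out in their ambient flag scheme, descends to the finite-dimensional quotient $X^w(\CJ)/U_{\CJ'}$. This is essentially a compatibility check: one needs the Bott--Samelson resolution to be $U_{\CJ'}$-equivariant, the stratification on the resolution to match the pullback stratification from $X^w(\CJ)/U_{\CJ'}$, and the $T$-equivariant structure from Proposition \ref{prop:fibration}(5) together with the identifications of Proposition \ref{prop-quotBruhatcells} to survive the quotient. Once these compatibilities are in place, one deduces both the odd-vanishing of the stalks and the identification of the Poincar\'e polynomial with $Q_{x,w}$ from the standard Kashiwara--Tanisaki argument.
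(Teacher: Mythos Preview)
The paper does not give an independent argument here: it simply cites Kashiwara--Tanisaki \cite[(4.8.4)]{KT}, noting that their proof (loc.\ cit., Theorem 6.6.4) is carried out in the language of mixed Hodge modules and then translated to perverse sheaves via Saito's functor $\mathrm{Rat}$. In that sense both you and the paper ultimately defer to \cite{KT}.

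Your sketch of how the underlying computation should go, however, has problems. The first reduction is unnecessary: $\IC_{\CJ,w}$ is already, by definition, a sheaf on the finite-type variety $X^w(\CJ)/U_\CJ$, and since $\CJ$ is open and contains $x$ it already contains the entire Bruhat interval $[w,x]$; passing to a larger $\CJ'$ only replaces the base by an affine bundle over it and changes nothing about the stalks. More seriously, your justification for the Bott--Samelson step is incorrect as stated: a minimal parabolic $P_s$ contains a lift of the simple reflection $s$, which permutes root subgroups and hence neither commutes with nor in general normalizes $U_{\CJ'}$, so the claimed descent of the parabolic action to the quotient fails. This is exactly the delicate point in the $G/B^-$ picture, where Schubert varieties have finite codimension rather than finite dimension; \cite{KT} does not proceed via an explicit resolution plus the decomposition theorem, but via the weight filtration in the category of mixed Hodge modules to obtain purity. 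The compatibility checklist you give at the end does not address this obstruction.
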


\begin{proof} This is \cite[(4.8.4)]{KT}. Its proof is in loc. cit. Theorem 6.6.4 in terms of mixed Hodge modules and does not require the Kac-Moody algebra to be symmetrizable, as stated at the end of Section 4 therein. The translation of the formula in terms of perverse sheaves is obtained through the functor Rat as in \cite{Sa}.
\end{proof}

\section{Constructible sheaves on $\fX^w$}

In this section, which is not needed for the rest of the paper, we describe, following \cite{Gin-loop}, how one can establish a theory of constructible sheaves on the whole of Kashiwara's flag scheme using the fibrations over schemes of finite type as in Proposition \ref{prop:fibration}, coming from the approximations  $X(\CJ)$ indexed  by finite open subsets $\CJ$ of $\CW$. 

\subsection{A staircase on Kashiwara's manifold}


The infinite-dimensional scheme $\fX$ can be described  in terms of a staircase of finite dimensional varieties as in \cite[6.1]{Gin-loop}. 

Let $ \{\CJ_\alpha\}_{\alpha\in D}$ be a collection of finite open subsets of $\CW$, parametrized by a set $D$. Assume that the partial order $\le$ induced on $D$ by inclusion is directed.   For $\alpha,\,\beta\in D$ with $\alpha<\beta$ we have a reverse inclusion  of subgroups ${\fU}_{\CJ_\beta}\subset {\fU}_{\CJ_\alpha}$. 

Let $\alpha,\,\beta\in D$ with $\alpha<\beta$. We set
\begin{equation*}{\fM}_{\beta,\alpha}:={\mathfrak X}(\CJ_\alpha)/{\fU}_{\CJ_\beta}\textrm{ and }{\fY}(\CJ_{\alpha}):={\fM}_{\alpha,\alpha}.\end{equation*}
The quotients ${\fM}_{\beta,\alpha}$  are finite-dimensional stratified smooth algebraic varieties by Proposition \ref{prop:fibration}.

For $\alpha,\beta,\gamma$ in $D$ with $\alpha< \beta\leq\gamma$ we consider the Zariski-open embeddings $j_\gamma^{\alpha\beta}\colon {\fM}_{\gamma,\alpha}\to {\fM}_{\gamma,\beta}$ induced by the open inclusion $\mathfrak X(\CJ_\alpha)\to \mathfrak X(\CJ_\beta)$, and  for $\lambda,\,\mu,\,\nu$ in $D$ with  $\lambda\leq \mu <\nu$ we have a natural projection $p^\lambda_{\nu\mu}\colon {\fM}_{\nu,\lambda}\to {\fM}_{\mu,\lambda}$. We also set $j_\gamma^{\alpha\alpha}=\id$, $p^\lambda_{\mu\mu}=\id$. 
By Proposition \ref{prop:fibration}, the projections
$\pi^{\lambda}_{\mu}\colon \mathfrak X(\CJ_\lambda)\to {\fM}_{\mu,\lambda}$ and 
$\pi^{\lambda}_{\nu}\colon \mathfrak X(\CJ_\lambda)\to {\fM}_{\nu,\lambda}$ are principal fibrations. 
Hence, $p^\lambda_{\nu\mu}$ is a locally trivial fibration with fiber ${\fU}_{\CJ_\mu}/{\fU}_{\CJ_\nu}$.
The latter is an affine space by virtue of the identification $\fA^e\simeq\fU$, the inclusion $\fU_{\CJ_\mu}\subset\fU$ and Proposition \ref{prop-quotBruhatcells}.
By construction, $p^\lambda_{\mu\nu}$ and $j_\gamma^{\alpha\beta}$ are stratified maps.  
Clearly, for $\alpha\leq\beta\leq\gamma\leq\mu$ we have 
\begin{equation*}j_\mu^{\beta\gamma}\circ j_\mu^{\alpha\beta}=j_\mu^{\alpha\gamma}\textrm{ and }p^\alpha_{\gamma\beta}\circ p_{\mu\gamma}^\alpha=p^\alpha_{\mu\beta}.\end{equation*} In addition, for any ${\alpha}\leq\beta\leq{\gamma}\leq{\mu}$
 the restriction of the fibration $p^\beta_{\gamma\mu}$ to the open subset ${\fM}_{\gamma,\alpha}$ of the basis of the fibration ${\fM}_{\gamma,\beta}$ coincides with $p^\alpha_{\gamma\mu}$. 
 
 This way, we get a staircase ${\fY}$ of smooth stratified varieties connected by the stratified maps as here below: 
 \begin{equation}
\label{eq:staircase}
\begin{CD}
 & &&&  &&&&&&@VV  V\\
 & &&  &&& {\fM}_{\gamma,\beta} & @>{j_\gamma^{\beta\gamma}}>> &{\fY}(\CJ_\gamma) \\
  & &&&&& @VV p^{\beta}_{\gamma\beta} V&&\\ 
 &&&{\fM}_{\beta,\alpha} @>j^{\alpha\beta}_{\beta}>> &{\fY}(\CJ_\beta)&  &\\
  &&&  @VVp^\alpha_{\beta\alpha}V&  \\ 
 @>>>&{\fY}(\CJ_\alpha) &&&
\end{CD}
\end{equation} 
 
 \subsection{Constructible complexes on the staircase ${\fY}$}\label{sec:complexes_staircase}

Recall the conventions on notation from Section \ref{sec:conventions}. We now define the category $D^b_c(Y,k)$ of constructible complexes on the staircase $Y$ of the topological spaces of  ${\mathbb C}$-points  corresponding to the quotients in ${\fY}$, see also \cite[6.2]{Gin-loop}.
 
For any $\alpha\in D$,  let $D^b_\Sigma(Y(\CJ_\alpha),k)$ be the bounded derived category of complexes of sheaves on $Y(\CJ_\alpha)$ whose cohomology sheaves are locally constant for the stratification $\Sigma$ induced by the Bruhat decomposition on ${\mathfrak X}$ and $X$. 
 
 We define the category $D^b_{c,D}(Y,k)$ as follows: 
 \begin{itemize}
 \item Objects are given by the datum $({\mathcal F},\phi)=(({\mathcal F}_\alpha)_{ \alpha\in D}, (\phi_{\beta\alpha})_{\alpha,\beta\in D, \alpha\leq\beta})$ where ${\mathcal F}_\alpha$ is an object in $D^b_\Sigma(Y(\CJ_\alpha),k)$  for any $\alpha\in D$ and  
   $\phi_{\beta\alpha}$  for any $\alpha,\,\beta\in D$ such that ${\alpha}\leq {\beta}$ is an isomorphism
 \begin{equation}\label{eq:phi}
\phi_{\beta\alpha}\colon (j_\beta^{\alpha\beta})^*{\mathcal F}_{\beta}\to (p^\alpha_{\beta\alpha})^*{\mathcal F}_\alpha,
 \end{equation} 
 in $D^b_c({\fM}_{\beta,\alpha},k)$ satisfying the compatibility conditions:
 \begin{equation}
 \phi_{\alpha\alpha}=\id\quad\textrm{ and } \quad (p^\alpha_{\gamma\beta})^*(\phi_{\beta\alpha})\circ (j^{\alpha\beta}_\gamma)^*(\phi_{\gamma\beta})=\phi_{\gamma\alpha} 
 \end{equation}
in $\textrm{Mor}((j^{\alpha\gamma}_\gamma)^*{\mathcal F}_\gamma,(p^\alpha_{\gamma\alpha})^*{\mathcal F}_\alpha)$ of $D^b_c({\fM}_{\gamma,\alpha},k)$, 
 for every $\alpha,\,\beta,\,\gamma$ such that $\alpha\leq\beta\leq\gamma$.
 \item Morphisms between objects $({\mathcal F},\phi)=(({\mathcal F}_\alpha)_{\alpha\in D}, (\phi_{\beta\alpha})_{\alpha,\beta\in D, \ \alpha\leq\beta})$ and $({\mathcal G},\psi)=(({\mathcal G}_\alpha)_{\alpha\in D}, (\psi_{\beta\alpha})_{\alpha,\beta\in D,\  \alpha\leq\beta})$ are given by the data $((f_\alpha)_{\alpha\in D})$ where $f_\alpha\colon {\mathcal F}_\alpha\to {\mathcal G}_\alpha$ for $\alpha\in D$ are morphisms in $D^b_c(Y(\CJ_\alpha),k)$ satisfying the commutativity condition:
 \begin{equation}
\label{eq:morphisms}
\begin{CD}
( j^{\alpha\beta}_\beta)^*{\mathcal F}_\beta&@>{\phi_{\beta\alpha}}>> & (p^\alpha_{\beta,\alpha})^*{\mathcal F}_\alpha\\
 @VV (j^{\alpha\beta}_\beta)^*f_\beta V&@VV (p_{\beta\alpha}^\alpha)^*f_\alpha V\\
 ( j^{\alpha\beta}_\beta)^*{\mathcal G}_\beta&@>{\psi_{\beta\alpha}}>> & (p^\alpha_{\beta,\alpha})^*{\mathcal G}_\alpha
\end{CD}
\end{equation} 
 \end{itemize}

From now on we restrict to the special family $D={\mathbb N}$ and $\CJ_n=\{w\in\CW~|~l(w)\leq n\}$ for $n\geq0$ and the corresponding category $D^b_{c,{\mathbb N}}({Y},k)$.
In this case we write $j_n$,  $p^n$ and $\phi_n$, respectively instead of $j_n^{n-1,n}$,  $p^{n+1,n}_n$ and $\phi_{n+1,n}$, respectively. Any sequence $(\phi_{n,m})_{n,m\in{\mathbb N}, n> m}$ satisfying \eqref{eq:phi} is completely determined by the sequence $(\phi_n)_{n\in{\mathbb N}}$. One can prove that $D^b_{c,D}(Y,k)$ and  $D^b_{c,{\mathbb N}}(Y,k)$ are equivalent for any choice of $D$.

 
Let ${\mathcal P}({Y}(\CJ_n),k)$ be the category of perverse sheaves on ${Y}(\CJ_n)$ with respect to the induced Bruhat stratification and middle perversity shifted so that all nonzero cohomology occurs in nonnegative degree, as in \cite[8.2]{FW}.  
 We define the full subcategory ${\mathcal P}({Y},k)$ of $D^b_{c,{\mathbb N}}({Y},k)$ of perverse sheaves on ${Y}$.   Objects are those $({\mathcal F},\phi)=(({\mathcal F}_n)_{n\in {\mathbb N}}, (\phi_{n+1,n})_{n\in{\mathbb N}})$ in $D^b_{c,{\mathbb N}}({Y},k)$ such that each ${\mathcal F}_n$ is an object in ${\mathcal P}({ Y}(\CJ_n),k)$. 
It can be verified that it is an abelian category.  

\begin{proposition}Let $m\in{\mathbb N}$. Any object ${\mathcal F}_m$ in ${\mathcal P}({Y}(\CJ_m),k)$
 extends to an object in ${\mathcal P}({Y},k)$ giving a functor
$F_m\colon {\mathcal P}({ Y}(\CJ_m),k)
\to {\mathcal P}({ Y},k)$.   
\end{proposition}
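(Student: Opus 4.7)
The plan is to build $F_m(\mathcal{F}_m) = ((\mathcal{F}_n)_{n\in\DN}, (\phi_n)_{n\in\DN})$ by a two-sided induction starting from the given index $m$, using only the structure maps of the staircase \eqref{eq:staircase}: the open embeddings $j^{n,n+1}_{n+1}\colon \fM_{n+1,n}\hookrightarrow Y(\CJ_{n+1})$ and the smooth affine fibrations $p^n_{n+1,n}\colon \fM_{n+1,n}\to Y(\CJ_n)$. By Proposition \ref{prop:fibration} the latter has finite relative dimension $d_n := \dim(\fU_{\CJ_n}/\fU_{\CJ_{n+1}})$, so $(p^n_{n+1,n})^*[d_n]$ is $t$-exact for the shifted perverse $t$-structure used in the paper; likewise $(j^{n,n+1}_{n+1})^*$ is $t$-exact, being the restriction to an open subscheme.

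For $n\geq m$ I extend upward by setting
\[
\mathcal{F}_{n+1} := (j^{n,n+1}_{n+1})_{!*}\bigl((p^n_{n+1,n})^*\mathcal{F}_n[d_n]\bigr),
\]
a perverse sheaf on $Y(\CJ_{n+1})$ whose restriction along $j^{n,n+1}_{n+1}$ is canonically $(p^n_{n+1,n})^*\mathcal{F}_n[d_n]$; this identification is $\phi_n$. For $n<m$ I proceed downward by restriction-then-descent: the sheaf $(j^{n,n+1}_{n+1})^*\mathcal{F}_{n+1}$ is perverse on $\fM_{n+1,n}$, and I descend it along $p^n_{n+1,n}$, which is a torsor under the connected finite-dimensional unipotent group $\fU_{\CJ_n}/\fU_{\CJ_{n+1}}$. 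Descent is canonical here because the Bruhat stratification of $\fM_{n+1,n}$ is pulled back from that of $Y(\CJ_n)$, and a constructible sheaf with respect to a stratification invariant under a connected unipotent group action is automatically equivariant; this produces $\mathcal{F}_n$ on $Y(\CJ_n)$ together with the identification $\phi_n$.

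Functoriality of $F_m$ is then routine: upward, from functoriality of pullback and of the intermediate extension $j_{!*}$ as a (non-exact) functor on the abelian category of perverse sheaves; downward, from the fact that descent along a unipotent torsor is an equivalence onto the essential image of pullback and is therefore functorial. For morphisms one simply applies each of these two operations at every level of the induction.

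The hard part will be checking the coherence of the composite compatibility isomorphisms $\phi_{n',n}$ for $n'-n\geq 2$, particularly across the transition $n\leq m\leq n'$ where an intermediate extension has to be juxtaposed with a descent. This reduces, via smooth base change along the Cartesian squares relating the various $\fM_{\nu,\lambda}$ that sit in the staircase \eqref{eq:staircase}, to two general facts: that intermediate extension commutes with further open restriction in the expected way, and that descent along a unipotent torsor is compatible both with further smooth pullback and with open restriction. Once these are granted, the cocycle condition appearing in \eqref{eq:morphisms} is verified stratum by stratum and propagated by functoriality to give a well-defined object of $\mathcal{P}(Y,k)$.
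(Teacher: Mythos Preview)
Your proposal follows essentially the same two-sided induction as the paper: upward by intermediate extension of the smooth pullback, downward by restricting along $j$ and then going back down along $p$. The paper phrases the downward step as the pushforward $\mathcal{F}_{n}:=(p^{n})_{*}(j_{n+1})^{*}\mathcal{F}_{n+1}$ with $\phi_n$ given by the counit $(p^{n})^{*}(p^{n})_{*}\to\id$; since $p^{n}$ is an affine-space fibration this is equivalent to your descent formulation, and in fact your equivariance remark is exactly what makes that counit an isomorphism.

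One small correction: with the paper's shifted perversity convention (cohomology in nonnegative degrees, as in \cite[8.2]{FW}) the plain pullback $(p^{n}_{n+1,n})^{*}$ is already $t$-exact, so your extra shifts $[d_n]$ should be removed; otherwise the resulting $\phi_n$ would land in the wrong target in \eqref{eq:phi}.
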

\begin{proof}Given ${\mathcal F}_m$, we construct the sequence $({\mathcal F}_n)_{n\in{\mathbb N}}$ inductively as follows. We set 
\begin{align*}
&{\mathcal F}_{m+1}:=(j_{m+1})_{!*}(p^m)^*{\mathcal F}_{m}, &&{\mathcal F}_{m-1}:=(p^{m+1})_{*}(j_m)^*{\mathcal F}_{m}, \\
&{\mathcal F}_{m+l}:=(j_{m+l})_{!*}(p^{m+l-1})^*{\mathcal F}_{m+l-1}&&\textrm{ for any $l\geq 1$}\\
&{\mathcal F}_{m-l}:=(p^{m-l})_{*}(j_{m-l+1})^*{\mathcal F}_{m-l+1} &&\textrm{ for any $1\leq l\leq m$}.
\end{align*} 
The sequence  $(\phi_n)_{n\in{\mathbb N}}$ is given using the adjunction $(p^n)^*(p^n)_*\to \id$ for $n\leq m-1$ and the natural isomorphisms $(p^{n-1})^*{\mathcal F}_{n-1}\simeq (j_n)^{*}(j_n)_{!*}(p^{n-1})^*{\mathcal F}_{n-1}$ for $n\geq m$ coming from the extension property of perverse sheaves.  It is straightforward to verify that $(({\mathcal F}_n)_{n\in{\mathbb N}}, (\phi_n)_{n\in{\mathbb N}})$ gives an object in ${\mathcal P}({Y},k)$ and that this construction is functorial.
\end{proof}

Let $w\in\CW$ and let $n\geq l(w)$. Then $O_{w,n}:=C^w/U_{\CJ_n}$ is a stratum in ${X}(\CJ_n)/U_{\CJ_n}$ and it is an affine space by Proposition \ref{prop:fibration}.  The object  $\IC(X^w,k)$ in ${\mathcal P}({Y},k)$ can be defined in terms of the staircase ${Y}$, by setting $\IC(X^w,k):=F_n \IC(\overline{O_{w,n}},k)$,
where $\IC(\overline{O_{w,n}},k)$ is the $\IC$-complex with indices normalised to ensure that it is an object in ${\mathcal P}({Y}(\CJ_n),k)$.
One can prove that it does not depend on the choice of $n$, that it is simple in ${\mathcal P}({Y},k)$ and that all simple objects in ${\mathcal P}({Y},k)$ are obtained this way. 


We can also define parity sheaves on $Y$ as those objects  $(({\mathcal F}_n)_{n\in{\mathbb N}}, (\phi_n)_{n\in{\mathbb N}})$  in $D^b_{c,{\mathbb N}}({ Y},k)$ such that ${\mathcal F}_n$ is a parity sheaf for every $n\in{\mathbb N}$ with respect to the constant pariversity. 

\begin{proposition}Parity sheaves on $Y$ exist and when $k={\mathbb C}$ they are the $\IC$-sheaves $\IC_{\CJ_n,w}$.
\end{proposition}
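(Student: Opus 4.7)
The strategy is to adapt the inductive construction from the preceding proposition, using the Juteau--Mautner--Williamson (JMW) existence/uniqueness theorem for parity sheaves on each $Y(\CJ_n)$ in place of the IC-extension functors, and then to verify in the complex-coefficient case that the $\IC$-extensions already satisfy the parity condition level by level.

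For existence, fix $w\in\CW$ and $n_0\ge l(w)$. Each $Y(\CJ_n)$ is a separated finite-type stratified variety with affine strata by Proposition~\ref{prop:fibration}, so the JMW hypotheses are satisfied and there is a parity sheaf $\CE_{w,n_0}$ on $Y(\CJ_{n_0})$ extending the constant sheaf on $O_{w,n_0}$. For $n\ge n_0$ the pullback $(p^n)^\ast\CE_{w,n}$ is parity on $\fM_{n+1,n}$ since $p^n$ is a locally trivial smooth fibration with affine fibers; JMW applied to the open stratified embedding $j_{n+1}\colon\fM_{n+1,n}\hookrightarrow Y(\CJ_{n+1})$ then provides a unique parity extension $\CE_{w,n+1}$ and an isomorphism $\phi_n\colon j_{n+1}^\ast\CE_{w,n+1}\xrightarrow{\sim}(p^n)^\ast\CE_{w,n}$. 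For $n<n_0$ I set $\CE_{w,n}:=(p^n)_\ast(j_{n+1})^\ast\CE_{w,n+1}$, which is parity because $j_{n+1}$ is an open stratified embedding and $p^n$ is a smooth affine fibration, with $\phi_n$ obtained from the adjunction unit. The cocycle condition in \eqref{eq:phi} is then forced by the uniqueness part of JMW, exactly as the analogous step in the proof of the preceding proposition used the perverse extension property.

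For the identification when $k=\DC$: Theorem~\ref{thm-KLforIC} says that the stalks of $\IC_{\CJ_n,w}$ at each fixed point $xx_0U_{\CJ_n}$ vanish in odd degrees. Since each stratum $O_{x,n}$ is $\fT$-equivariantly an affine space with unique fixed point $xx_0U_{\CJ_n}$ (Propositions~\ref{prop-skeleton} and \ref{prop:fibration}) and the cohomology sheaves of $\IC_{\CJ_n,w}$ are locally constant along strata, the stalk-parity at the fixed point upgrades to $\ast$-parity of the restriction to every stratum. Verdier self-duality of the IC-complex then yields $!$-parity as well. Hence each $\IC_{\CJ_n,w}$ is a parity sheaf for the constant pariversity on $Y(\CJ_n)$, and the compatibility isomorphisms built in the preceding proposition realise $(\IC_{\CJ_n,w})_{n}$ as a parity sheaf on $Y$; by indecomposability it is the parity sheaf associated to $w$.

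The principal subtlety is to verify that the inductively defined $\phi_n$'s fit together to satisfy the cocycle condition in \eqref{eq:phi}; this is precisely where JMW-uniqueness is indispensable. A secondary point, purely of bookkeeping nature, is to confirm that $Y(\CJ_n)$ with its Bruhat-induced stratification falls within the scope of JMW, which follows from the affineness of the strata in Proposition~\ref{prop:fibration} together with the smooth-fibration and open-embedding structure of $p^n$ and $j_n$.
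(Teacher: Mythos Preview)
Your proof is correct and follows the same approach as the paper's, which simply cites \cite[Corollary~2.20]{JMW} (existence of parity sheaves from contractibility of strata) for the first claim and Theorem~\ref{thm-KLforIC} together with \cite[Theorem~2.12]{JMW} for the second. You have spelled out in detail the inductive staircase construction and the stalk-to-stratum parity upgrade that the paper leaves implicit in those two citations.
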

\begin{proof}
The first statement follows  from \cite[Corollary 2.20]{JMW} because strata are contractible. The second one follows from Theorem 2.1 and \cite[Theorem 2.12]{JMW}.
\end{proof}

 \section{Moment graphs and equivariant cohomology}
 
 The link between the topology of Kashiwara's flag scheme and representation theory that we utilize is given by equivariant cohomology and moment graphs. 
 
 \subsection{The associated moment graph}\label{sec-assocmomgra}
 Let $\CJ\subset\CW$ be a finite open set. We denote by $Y(\CJ):=X(\CJ)/U_\CJ$ the topological space associated with the (finite dimensional) variety $\fX(\CJ)/\fU_\CJ$.   The corresponding moment graph $\CG(\CJ)$ is given as follows.  Its set of vertices is $Y(\CJ)^T$, the set of $T$-fixed points, and  $x,y\in Y(\CJ)^T$ are connected by an edge if and only if $x\ne y$ and there is a one-dimensional $T$-orbit $E$ such that $\ol E=E\cup\{x,y\}$.   Then the edge $E$ is homeomorphic to $\fg_\alpha^\times$ as a $T$-space for some $\alpha\in R^+$ and we set $l(E)=\alpha$ and call this the {\em label} of $E$ (cf. Proposition \ref{prop-skeleton} (2)).  Each Bruhat cell $\widetilde{C^x}=C^x/U_\CJ$ contains a unique fixed point  (cf. Proposition \ref{prop-skeleton} (1)) and the closure relation on Bruhat cells yields a partial order on the set of vertices: We write $x\le y$ if $\widetilde{C^y}$ is contained in the closure of $\widetilde{C^x}$ . 

Proposition \ref{prop-skeleton} shows that we have a canonical identification $Y(\CJ)^T=\CJ$. Moreover, $x,y\in\CJ$ are connected if and only if there exists $\alpha\in R^+$ such that $y=s_\alpha x$, and the edge is labeled by $\pm\alpha$. Finally, the partial order on the vertices coincides with the Bruhat order, by Proposition \ref{prop-quotBruhatcells}. 
Note that  two connected vertices are comparable by Proposition \ref{prop-skeleton}.  

Denote by  $S$ the symmetric algebra over the vector space $\fhd$. We consider $S$ as a graded algebra with $\fhd\subset S$ being the homogeneous component of degree $2$.
The {\em structure algebra}  of the moment graph $\CG(\CJ)$ is
$$
\CZ(\CJ):=\left\{(z_x)\in\prod_{x\in\CJ} S\left|
\begin{matrix}
\text{ $z_x\equiv z_{s_\alpha x}\mod \alpha$}\\
\text{  for all $x\in\CJ$, $\alpha\in R^+$ such that $s_\alpha x\in\CJ$}
\end{matrix}
\right\}.
\right.
$$
Coordinatewise addition and multiplication makes $\CZ(\CJ)$ into an
$S$-algebra. 


\subsection{The Braden--MacPherson sheaves}\label{sec-BMPsheaves}
Of particular importance is the notion of a sheaf on a moment graph. 
\begin{definition}
A {\em sheaf} $\SM$ on the moment graph $\CG(\CJ)$ is given by the following data:
\begin{itemize}
\item an $S$-module $\SM^x$ for any vertex $x\in\CJ$,
\item an  $S$-module $\SM^E$ with $l(E)\SM^E=0$ for all edges $E$ of $\CG(\CJ)$.
\item a homomorphism $\rho_{x,E}\colon \SM^x\to \SM^E$ of  $S$-modules for any vertex $x$ lying on the edge $E$.
\end{itemize}
\end{definition}
Let $\SM$ be a sheaf on $\CG$. For an open subset $\CJ^\prime$ of $\CJ$ we
define the {\em space of sections} of $\SM$ over $\CJ^\prime$ by
$$
\Gamma(\CJ^\prime,\SM):=\left\{(m_x)\in\prod_{x\in\CJ^\prime} \SM^x\left|\begin{matrix}
      \rho_{x,E}(m_x)=\rho_{y,E}(m_y) \\
\text{ for all edges $E\colon x\linie y$}\\
\text{ with $x,y\in\CJ^\prime$}
\end{matrix}\right.\right\}.
$$
Coordinatewise multiplication makes $\Gamma(\CJ^\prime,\SM)$ into a
$\CZ(\CJ^\prime)$-module.  We call the space $\Gamma(\SM):=\Gamma(\CJ,\SM)$ the space of {\em global sections}. For two open subsets $\CJ^{\prime\prime}\subset \CJ^\prime$  of $\CJ$ the canonical projection $\prod_{x\in \CJ^\prime}\SM^x\to \prod_{x\in \CJ^{\prime\prime}}\SM^x$ induces a {\em restriction map} $\Gamma(\CJ^\prime,\SM)\to\Gamma(\CJ^{\prime\prime},\SM)$.

Of particular importance is the following family of sheaves on $\CG(\CJ)$.

\begin{theorem}[{\cite[Section 1.4]{BMP},\cite[Definition 6.3 \& Theorem 6.4]{FW}}] Let $x\in\CJ$ be a vertex. There is an up to isomorphism unique sheaf $\SB(x)$ on $\CG(\CJ)$ with the following properties.
\begin{enumerate}
\item $\SB(x)$ is indecomposable.
\item For any $w\in\CJ$, the $S$-module $\SB(x)^w$ is (graded) free, $\SB(x)^w\ne(0)$ implies $x\le w$ and $\SB(x)^x\cong S$.
\item For any open subset $\CJ^\prime$ of $\CJ$, the restriction homomorphism $\Gamma(\CJ,\SB(x))\to\Gamma(\CJ^\prime,\SB(x))$ is surjective.
\item For any $w\in\CJ$, the homomorphism $\Gamma(\SB(x))\to\SB(x)^w$, $(z_y)\mapsto z_w$, is surjective.
\end{enumerate}
\end{theorem}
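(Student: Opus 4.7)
The plan is to construct $\SB(x)$ stalk by stalk, by induction on the Bruhat order starting from $x$ as the minimal vertex of its support, and to extract uniqueness in lockstep from the minimality built into the construction. This is the standard Braden--MacPherson procedure adapted to our moment graph, and the main content of the proof will be to check that the inductive step is always available in this generality.

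First I would fix a linear extension of the partial order on the finite set $\{w \in \CJ \mid w \geq x\}$, initialise $\SB(x)^x := S$ and $\SB(x)^w := 0$ for $w \not\geq x$, and inductively process the remaining vertices in increasing order. Suppose $\SB(x)$ has already been constructed on an open subset $\CJ' \subset \CJ$ and that adjoining a single new vertex $w$ with $w \not\in \CJ'$ and $w \geq x$ yields a larger open subset. For each edge $E$ connecting $w$ to some $y \in \CJ'$, set $\SB(x)^E := \SB(x)^y / l(E)\SB(x)^y$ with $\rho_{y,E}$ the canonical projection (noting $\SB(x)^E = 0$ if $y \not\geq x$). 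Form $u^w := \bigoplus_E \SB(x)^E$, and let $v^w$ be the image of the natural map $\Gamma(\CJ', \SB(x)) \to u^w$ given by evaluation at endpoints followed by projection. Finally, take a graded projective cover $\SB(x)^w \twoheadrightarrow v^w$ in the category of graded $S$-modules, and define each $\rho_{w,E}$ as the resulting composite $\SB(x)^w \to v^w \hookrightarrow u^w \to \SB(x)^E$.

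Properties (2)--(4) are then essentially built into the construction and verified by parallel induction. Freeness and the support condition give (2) immediately. Property (4) at the newly adjoined $w$ follows because every element of $\SB(x)^w$ covers an element of $v^w$, which by definition lifts to a section over $\CJ'$ whose values satisfy the edge compatibilities needed to extend across $w$; iterating this along chains of open subsets yields (3). Indecomposability (1) follows from the minimality of the projective covers, since any nontrivial idempotent of $\End(\SB(x))$ would via (4) produce an idempotent of $\End_S(\SB(x)^x) = \End_S(S)$, which has only trivial idempotents. For uniqueness, if $\SB'$ also satisfies (1)--(4), then $(\SB')^x \cong S$ by (2) and support, and inductively (3) and (4) force the analogous submodule $(v')^w \subseteq (u')^w$ to coincide with $v^w$ under the comparison isomorphism already established over $\CJ'$. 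The universal property of the projective cover then yields a morphism $\SB(x) \to \SB'$; minimality of both sides makes its image a direct summand, and indecomposability upgrades this to an isomorphism.

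The step I expect to be the main obstacle is the verification that the one-vertex extension procedure is compatible with property (3): that is, that a section of $\SB(x)$ over $\CJ'$ whose projected image at $w$ lies in the chosen lift $\SB(x)^w$ really does extend to a section over $\CJ' \cup \{w\}$, and dually that no sections are lost in the restriction. The content is precisely that $v^w$ was defined as the image of $\Gamma(\CJ', \SB(x)) \to u^w$, so the compatibility $\rho_{w,E}(m_w) = \rho_{y,E}(m_y)$ is tautological once the lift is chosen; the genuine work is in setting up the graded projective cover so that such lifts exist functorially. This requires that $v^w$ be finitely generated as a graded $S$-module and that $l(E)$ be a homogeneous non-zero-divisor of degree $2$, so that the quotients $\SB(x)^y / l(E)\SB(x)^y$ behave as expected — a bookkeeping of grading shifts that follows the scheme of \cite{FW}. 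Once this is in place the remainder of the argument unfolds formally from the combinatorics of $\CG(\CJ)$.
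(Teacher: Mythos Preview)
The paper does not supply its own proof of this theorem: it is stated with attribution to \cite{BMP} and \cite{FW}, and the only argument given is the one-sentence remark that projective covers exist in the category of graded $S$-modules, which is what makes the Braden--MacPherson algorithm run. Your proposal is precisely a sketch of that algorithm (the ``canonical sheaf'' construction of \cite{BMP}, in the form generalised in \cite{FW}), so it is correct and agrees with the approach the paper defers to.
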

$\SB(x)$ is called the {\em Braden--MacPherson sheaf} associated with $x$. Recall that we consider $S$ as a graded algebra, so projective covers exist in the category of $S$-modules. This ensures that the algorithm of the ``canonical sheaf'' in \cite{BMP} works. 

\subsection{Localization of equivariant sheaves} For a locally closed inclusion $i\colon W\subset Z$ of topological spaces and a sheaf $\SF$ on $Z$ we write $\SF_W$ for the restriction $i^\ast\SF$.  Again let $\CJ$ be an open subset of $\CW$. 
 Let $\SF$ be a $T$-equivariant sheaf on $Y(\CJ)$, i.e. an object in $D^b_T(Y(\CJ),\DC)$.  We will now associate a sheaf $\DW(\SF)$ on $\CG(\CJ)$ to $\SF$. For a vertex $x$ (i.e., a $T$-fixed point in $Y(\CJ)$) we set  $\DW(\SF)^x:=\DH_T^\bullet(\SF_x)$, and for an edge $E$ (i.e., a one-dimensional $T$-orbit in $Y(\CJ)$) we set  $\DW(\SF)^E:=\DH_T^\bullet(\SF_E)$. Then $l(E)\DW(\SF)^E=\{0\}$ by \cite[Lemma 3.1]{FW}.

Suppose that the $T$-fixed point $x$ is contained in the closure of the one dimensional orbit $E$. Then the restriction homomorphism
$$
\DH_T^\bullet(\SF_{E\cup\{x\}})\to \DH_T^\bullet(\SF_{x})
$$
is an isomorphism (cf. \cite[Proposition 2.3]{FW}), so we can define a homomorphism $\rho_{x,E}$ as the composition
$$
\DH_T^\bullet(\SF_x)\stackrel{\sim}\leftarrow \DH_T^\bullet(\SF_{E\cup\{x\}})\to \DH_T^\bullet(\SF_E).
$$
In this way we indeed obtain a sheaf $\DW(\SF)$ from $\SF$. 

\begin{remark}\label{rem-degfromglobsec}  Let $Q$ be the quotient field of $S$. The inclusion $\CZ(\CJ)\subset\bigoplus_{x\in\CJ} S$ becomes an isomorphism after applying the functor $\cdot\otimes_S Q$ (since   $\CJ$ is finite, cf. \cite{F07}). For any $\CZ(\CJ)$-module $M$ we hence obtain a canonical generic decomposition $M\otimes_S Q=\bigoplus_{x\in\CJ}(M\otimes_SQ)^x$ that is such that $z=(t_x)$ acts on the component $(M\otimes_S Q)^x$ as multiplication with $t_x$. If $\SF$ is an equivariant sheaf on $Y(\CJ)$ such that $\DH_T(\SF_x)$ is a graded free $S$-module of finite rank, then we obtain
$$
\rk_S\, \DH_T(\SF_x)=\dim_Q (\DW(\SF)\otimes_S Q)^x.
$$
\end{remark}

\subsection{Localization of equivariant IC-sheaves}
The following result is the main reason for our interest in moment graphs. Its original version was proven  in \cite{BMP} (with characteristic $0$ coefficients). It was later generalized to almost arbitrary coefficients in \cite{FW}. In these cases one has to replace the IC-sheaves with parity sheaves. 

\begin{theorem}\label{thm-ICBMP}  Let $\CJ\subset\CW$ be finite and open, and let $x\in\CJ$.  Let $\SF$ be the $T$-equivariant intersection cohomology sheaf with complex coefficients on the Schubert variety $Y^x(\CJ):=X^x(\CJ)/U_\CJ\subset Y(\CJ)=X(\CJ)/U_\CJ$ .  Then $\DW(\SF)$ is isomorphic to the Braden--MacPherson sheaf $\SB(x)$  up to a grading shift. 
\end{theorem}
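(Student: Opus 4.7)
The plan is to verify that $\DW(\SF)$ satisfies, up to a shift in the grading, the four properties of Section \ref{sec-BMPsheaves} that characterize the Braden--MacPherson sheaf $\SB(x)$, so that uniqueness does the rest. The geometric setup makes this legitimate: by Proposition \ref{prop:fibration} the space $Y^x(\CJ)=X^x(\CJ)/U_\CJ$ is a finite-type complex variety stratified by affine spaces, and by Proposition \ref{prop-skeleton} its $T$-fixed points together with its one-dimensional $T$-orbits realize precisely the restriction of the moment graph $\CG(\CJ)$ to the vertices $\{w\in\CJ\mid w\ge x\}$. One is therefore reduced to a classical Braden--MacPherson localization problem on a finite-type $T$-variety.

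The crucial input is the parity vanishing provided by Theorem \ref{thm-KLforIC}: the stalks $i_w^\ast\SF$ have cohomology concentrated in even degrees. The standard equivariant formality argument then yields that each $\DW(\SF)^w=\DH_T^\bullet(\SF_w)$ is a graded free $S$-module, with rank equal to the total dimension of the ordinary stalk cohomology. The support condition $\DW(\SF)^w\ne 0\Rightarrow x\le w$ is immediate from $\supp\SF\subset Y^x(\CJ)$ combined with the fixed-point description of Proposition \ref{prop-quotBruhatcells}; the normalization $\DW(\SF)^x\cong S$ (up to shift) reflects the IC convention on the open stratum. These facts establish property (2), and indecomposability of $\SF$ as a perverse sheaf, together with the fact that its graded endomorphism ring is one-dimensional in degree zero, transfers to indecomposability of $\DW(\SF)$, giving property (1).

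For the surjectivity statements (3) and (4) I would argue inductively along the closed stratification of $Y^x(\CJ)$. Given a closed union of strata $Z$ and an open stratum $C^w/U_\CJ\subset Z$, the long exact sequence in $T$-equivariant cohomology relating sections over $Z$, sections over $Z\setminus(C^w/U_\CJ)$, and the compactly supported equivariant cohomology of the stratum, involves terms of pure but opposite parity by Theorem \ref{thm-KLforIC}. Hence the connecting maps vanish, producing short exact sequences from which (3) follows directly and (4) follows by a further specialization to the fixed point combined with Remark \ref{rem-degfromglobsec}. The main obstacle will be the careful identification of the spaces of sections of $\DW(\SF)$ with the appropriate $T$-equivariant cohomology groups of $\SF$ over open unions of strata, which is the heart of the localization theorem. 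This follows the pattern of \cite{BMP} in characteristic zero (respectively \cite{FW} for parity sheaves in general characteristic); once the reduction to the finite-type quotient $Y^x(\CJ)$ is in hand, the arguments apply essentially verbatim.
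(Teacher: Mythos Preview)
Your proposal is correct and is precisely the argument the paper has in mind: the paper does not give its own proof of Theorem~\ref{thm-ICBMP} but simply cites \cite{BMP} (characteristic zero) and \cite{FW} (general coefficients, with parity sheaves in place of IC-sheaves), and your outline is exactly the Braden--MacPherson/Fiebig--Williamson verification of the four characterizing properties, after reducing to the finite-type quotient $Y^x(\CJ)$ via Propositions~\ref{prop:fibration} and~\ref{prop-skeleton}.

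The only point worth flagging is the source of the parity vanishing. You invoke Theorem~\ref{thm-KLforIC} (Kashiwara--Tanisaki) to obtain odd vanishing of the stalks, which is legitimate but heavier than necessary: in \cite{BMP} the parity of IC-stalks over $\DC$ is obtained from the Decomposition Theorem (applied, e.g., to a Bott--Samelson-type resolution), and in \cite{FW} one works with parity sheaves, for which the required vanishing is definitional. Either route works here, since $Y^x(\CJ)$ is a finite-type variety stratified by affine spaces, but using Theorem~\ref{thm-KLforIC} imports the full Kashiwara--Tanisaki computation where only the much softer parity statement is needed. In the paper's overall architecture this matters slightly, because Theorem~\ref{thm-KLforIC} is singled out as the deepest ingredient in the proof of the Kazhdan--Lusztig conjecture; keeping Theorem~\ref{thm-ICBMP} logically independent of it (as \cite{BMP,FW} do) makes the separation of inputs cleaner.
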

Using Remark \ref{rem-degfromglobsec}  the dimensions of stalks of the Braden-MacPherson sheaves and the local cohomologies of IC-sheaves on $X$ coincide.

\section{Passage to representation theory}

In this section we want to apply the topological results above to the representation theory of complex Kac--Moody algebras and give a new proof of the Kazhdan--Lusztig conjecture on the characters of irreducible highest weight representations at negative level. 

Before we start, we need to fix some more notation. 
Denote by $\Pi\subset R^+$ the set of simple roots,  by $R^{\re}\subset R$ the set of {\em real roots}, and by $R^{\im}=R\setminus R^{\re}$ the set of imaginary roots. For a real root $\alpha$ denote by $\alpha^\vee\in\fh$ its coroot.  Denote by $\CS$ the set of simple reflections in $\CW$. Fix once and for all a {\em Weyl vector} $\rho\in\fhd$, i.e. an element with the property $\lgl\rho,\alpha^\vee\rgl=1$ for all $\alpha\in\Pi$. Then the {\em dot-action} of $\CW$ on $\fhd$ is given by $w.\lambda=w(\lambda+\rho)-\rho$. 

We denote by $\le$ the usual partial order on $\fhd$, i.e. $\mu\le\lambda$ if $\lambda-\mu$ can be written as a sum of positive roots. Recall that we assume that $\fg$ is symmetrizable, i.e. that there exists an invariant non-degenerate symmetric form $(\cdot,\cdot)\colon\fg\times\fg\to\DC$.

Recall that an element $\lambda\in\fhd$ is called
\begin{itemize}
\item {\em integral}, if $\lgl\lambda,\alpha^\vee\rgl\in\DZ$ for all $\alpha\in \Pi$,
\item {\em regular}, if  $w\in\CW$ and $w.\lambda=\lambda$ imply $w=e$,
\item {\em non-critical}, if $2(\lambda+\rho,\delta)\not\in\DZ(\delta,\delta)$ for all $\delta\in R^{\im}$,
\item {\em anti-dominant}, if $\lgl\lambda,\alpha^\vee\rgl\not\in\DZ_{\ge 0}$ for all $\alpha\in\Pi$.
\end{itemize}

\subsection{The Kazhdan--Lusztig conjecture at negative level}

For any $\lambda\in\fhd$ we denote by $L(\lambda)$ the irreducible representation of $\fg$ with highest weight $\lambda$. For $\mu\in\fhd$ denote by $\Delta(\mu)$ the Verma module of $\fg$ with highest weight $\mu$.

The Kazhdan--Lusztig conjecture in the negative level of Kac--Moody algebras is the following:

\begin{conjecture}\label{conj-KL} Suppose $\lambda$ is  non-critical,  integral, regular and anti-dominant. Let $w\in\CW$. Then   
$$
\ch L(w.\lambda)=\sum_{y\le w} (-1)^{l(w)-l(y)}P_{y,w}(1)\ch \Delta(y.\lambda),
$$
where $P_{y,w}\in\DZ[v]$ denotes the Kazhdan--Lusztig polynomial associated with $y$ and $w$ for the Coxeter system $(\CW,\CS)$. 
 \end{conjecture}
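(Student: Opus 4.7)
The plan is to transfer the Kazhdan--Lusztig problem from the representation-theoretic side into a statement about total stalk dimensions of equivariant IC-sheaves on Kashiwara's flag scheme, using the moment graph $\CG(\CJ)$ as an intermediary. Fix $\lambda$ as in the statement and work inside the block $\CO^\vee_\lambda\subset\CO^\vee$ containing $\lambda$. Anti-dominance together with regularity and integrality imply that $w\mapsto L(w.\lambda)$ is a bijection between $\CW$ and the set of simple objects in this block, and that $[\Delta(x.\lambda):L(w.\lambda)]\ne 0$ forces $w.\lambda\le x.\lambda$, which for anti-dominant $\lambda$ is equivalent to $w\le x$ in the Bruhat order. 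This matches the negative-level situation to Kashiwara's flag scheme $G/B^-$, whose moment graph, as observed in the introduction, carries the partial order opposite to the one arising from $G/B$.

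The central step is to construct a Soergel-type structure functor $\bV\colon\CO^\vee_\lambda\to\CZ\catmod$ (adapted from the dominant-level construction of \cite{F07} by passing to the reversed Bruhat order) and to prove that, after restricting to a finite open $\CJ\subset\CW$ containing $w$ and to the corresponding truncation of the block, $\bV$ sends the indecomposable projective cover $P(w.\lambda)$ of $L(w.\lambda)$ to the space of global sections $\Gamma(\CJ,\SB(w))$ of the Braden--MacPherson sheaf attached to $w$, in such a way that a Verma filtration of $P(w.\lambda)$ corresponds to the vertex-stalk decomposition. This identification yields
$$
\bigl(P(w.\lambda):\Delta(x.\lambda)\bigr)=\rk_S\SB(w)^x
$$
for all $x\in\CJ$ with $w\le x$.

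Granted this, the rest of the argument is a matter of stringing together results already on the page. Theorem~\ref{thm-ICBMP} identifies $\SB(w)$ up to a grading shift with $\DW(\SF)$ for the equivariant intersection cohomology sheaf $\SF$ on the Schubert variety $Y^w(\CJ)$; Remark~\ref{rem-degfromglobsec} converts $\rk_S\SB(w)^x$ into the total dimension of the stalk of $\SF$ at $x$; and Theorem~\ref{thm-KLforIC} of Kashiwara--Tanisaki identifies that stalk dimension with $Q_{x,w}(1)$. Combining with the BGG reciprocity $(P(w.\lambda):\Delta(x.\lambda))=[\Delta(x.\lambda):L(w.\lambda)]$, which is valid in the truncated block, we deduce
$$
[\Delta(x.\lambda):L(w.\lambda)]=Q_{x,w}(1)
$$
for all $w\le x$. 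The composition-multiplicity matrix is therefore unitriangular with entries the inverse Kazhdan--Lusztig polynomials evaluated at $1$; inverting it by the standard reciprocity identity between $P$- and $Q$-polynomials at the same value and substituting into $\ch\Delta(y.\lambda)=\sum_{w\le y}[\Delta(y.\lambda):L(w.\lambda)]\ch L(w.\lambda)$ produces the asserted character formula.

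The principal obstacle is the construction of $\bV$ and the proof of the identification $\bV(P(w.\lambda))\cong\Gamma(\CJ,\SB(w))$ in the anti-dominant, negative-level regime. Concretely, for each finite open $\CJ$ one must establish that the truncated block supported on $\{x.\lambda\mid x\in\CJ\}$ has enough projectives, each admitting a Verma flag; that $\bV$ is exact on Verma-flag objects and fully faithful on projectives; and that the $\CZ(\CJ)$-module it produces from $P(w.\lambda)$ satisfies the four defining properties of $\SB(w)$ from Section~\ref{sec-BMPsheaves}: indecomposability, graded freeness of the vertex stalks (which vanish unless $w\le x$), surjectivity of global-to-local restriction, and surjectivity of evaluation at each vertex. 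Once these structural results are in place, everything downstream follows mechanically from results already quoted.
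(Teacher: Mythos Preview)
Your proposal is correct and follows essentially the same route as the paper: reduce to the multiplicity statement, apply BGG reciprocity in a truncated block, identify projective Verma multiplicities with stalk ranks of Braden--MacPherson sheaves via a structure functor, and then invoke Theorem~\ref{thm-ICBMP}, Remark~\ref{rem-degfromglobsec}, and Theorem~\ref{thm-KLforIC}. Two points of precision the paper supplies that you leave implicit: the identification goes through the \emph{Langlands dual} moment graph, so one should write $\SB^\vee(w)$ on $\CG^\vee$ rather than $\SB(w)$; and the ``principal obstacle'' you flag---matching $\bV(P(w.\lambda))$ with $\Gamma(\SB^\vee(w))$---is handled not by verifying the axioms of Section~\ref{sec-BMPsheaves} directly but by passing to the \emph{deformed} category $\tilde\CO$ over the localisation $\tS^\vee$ of $S(\fh)$, computing the center of the truncated deformed block as $\CZ^\vee(\CJ')\otimes_{S^\vee}\tS^\vee$, and then quoting \cite[Remark~7.6]{F07}.
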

 
Note that the above is an obvious and immediate generalization of the conjecture stated in \cite{L90}  in the affine negative level case. The latter conjecture was  proven in \cite{KT95}.

 Instead of stating the character of an irreducible module in terms of Verma characters, one can also obtain an equivalent conjecture describing Verma characters in terms of irreducibles (i.e. a conjecture on Jordan--H\"older multiplicities). If we denote by $[\Delta(\mu):L(\nu)]$ the multiplicity of $L(\nu)$ in a Jordan--H\"older filtration of $\Delta(\mu)$, then Conjecture \ref{conj-KL} is equivalent to the following conjecture.

\begin{conjecture} \label{conj-KLmult} Suppose $\lambda$ is non-critical, integral, regular and anti-dominant. Let $x,y\in\CW$. Then
$$
[\Delta(x.\lambda):L(y.\lambda)]=Q_{x,y}(1),
$$
where $Q_{x,y}\in\DZ[v]$ is the {\em inverse} Kazhdan--Lusztig polynomial associated with $x$ and $y$ for the Coxeter system $(\CW,\CS)$.
\end{conjecture}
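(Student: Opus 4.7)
The plan is to translate Jordan--Hölder multiplicities in the block $\CO^\vee_\lambda$ into ranks of Braden--MacPherson sheaves through a Soergel-type structure functor, and then to import the topology of Kashiwara's Schubert varieties via Theorems \ref{thm-ICBMP} and \ref{thm-KLforIC}.

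The first step is a reduction via BGG reciprocity. Since $\lambda$ is non-critical, integral and regular, the block $\CO^\vee_\lambda$ admits truncated subcategories $\CO^\vee_{\lambda,\CJ}$ indexed by finite open sets $\CJ\subset\CW$, in which each simple object $L(y.\lambda)$ with $y\in\CJ$ has a projective cover $P(y.\lambda)$. Because $\lambda$ is antidominant, a simple reflection raises the dot-orbit, so $y.\lambda\le x.\lambda$ is equivalent to $y\le x$ in the standard Bruhat order, and $L(y.\lambda)$ can appear in $\Delta(x.\lambda)$ only when $y\le x$. Standard BGG reciprocity in the truncated block then gives
$$
[\Delta(x.\lambda):L(y.\lambda)] = (P(y.\lambda):\Delta(x.\lambda)),
$$
so it suffices to compute the Verma multiplicities on the right.

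The second step adapts Soergel's structure functor to the antidominant setting, mirroring the positive-level treatment of \cite{F07}: one constructs an exact functor $\DV\colon\CO^\vee_{\lambda,\CJ}\to\CZ(\CJ)\catmod$ that is fully faithful on projectives and sends $P(y.\lambda)$ to the space of global sections $\Gamma(\SB(y))$ of the Braden--MacPherson sheaf on $\CG(\CJ)$ attached to the vertex $y$. The generic decomposition of Remark \ref{rem-degfromglobsec} translates Verma multiplicities into ranks at vertices:
$$
(P(y.\lambda):\Delta(x.\lambda)) = \rk_S \SB(y)^x.
$$
Here the key combinatorial match is that the moment graph of $Y(\CJ)$ carries the standard Bruhat order (cf. Proposition \ref{prop-quotBruhatcells}), which is exactly the order on the antidominant dot-orbit; this is why Kashiwara's version $G/B^-$, rather than $G/B$, is the correct geometric avatar.

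The third step invokes the geometric input. Theorem \ref{thm-ICBMP} identifies $\SB(y)$, up to grading shift, with the localization $\DW(\IC_{\CJ,y})$ of the $T$-equivariant intersection cohomology complex on the Schubert variety $Y^y(\CJ)$, so $\rk_S\SB(y)^x$ equals $\rk_S \DH^\bullet_T(i_x^*\IC_{\CJ,y})$. By Theorem \ref{thm-KLforIC}, $i_x^*\IC_{\CJ,y}$ has vanishing odd cohomology, which forces the equivariant-to-ordinary spectral sequence to collapse and gives $\rk_S\DH^\bullet_T(i_x^*\IC_{\CJ,y})=\sum_j\dim \DH^{2j}(i_x^*\IC_{\CJ,y})=Q_{x,y}(1)$. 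Chaining the three identifications yields $[\Delta(x.\lambda):L(y.\lambda)]=Q_{x,y}(1)$, as desired.

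The main obstacle is the construction of $\DV$ for the antidominant block together with the identification $\DV(P(y.\lambda))\cong\Gamma(\SB(y))$. The former demands a careful analysis of projective covers in the truncated subcategories and of translation functors to singular antidominant walls; the latter proceeds by induction along the Bruhat order, using wall-crossing to generate both sides and invoking the characterization of Braden--MacPherson sheaves by indecomposability, flabbiness, and graded freeness of stalks to pin down the image. Once these two inputs are in place, the rest of the argument is a bookkeeping chain through the results already collected.
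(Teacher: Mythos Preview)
Your proposal is essentially the same argument as the paper's: BGG reciprocity (Theorem \ref{thm-BGG}) reduces the Jordan--H\"older multiplicity to a Verma multiplicity in a truncated projective, the structure functor identifies this with the rank of a stalk of a Braden--MacPherson sheaf (Theorem \ref{thm-repmomgra}), equivariant localization (Theorem \ref{thm-ICBMP}) matches this with an IC stalk on a finite-dimensional quotient of Kashiwara's Schubert variety, and Kashiwara--Tanisaki (Theorem \ref{thm-KLforIC}) evaluates the latter as $Q_{x,y}(1)$. Your identification of the ``main obstacle'' is precisely the content of Theorem \ref{thm-repmomgra}, whose proof the paper imports from \cite{F07} via the deformed category $\tilde\CO$ and the center $\tZ^\CJ_\Lambda$.

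One point to tighten: the paper is careful to pass through the \emph{Langlands dual} moment graph. The center of the deformed block is described by congruences modulo \emph{coroots} $\alpha^\vee$, so the structure algebra one obtains is $\CZ^\vee(\CJ')\otimes_{S^\vee}\tS^\vee$, and the sheaf that appears is $\SB^\vee(y)$ on $\CG^\vee$, not $\SB(y)$ on $\CG(\CJ)$. Correspondingly, the geometry invoked in the final step is that of Kashiwara's flag scheme for the dual root datum. This does not affect the conclusion, since $(\CW,\CS)$ and hence $Q_{x,y}$ are the same on both sides, but your write-up conflates $\CO_\lambda$ with $\CO^\vee_\lambda$ and $\SB(y)$ with $\SB^\vee(y)$; you should keep track of which side carries the duality.
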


We need yet another reformulation of the Conjecture in terms of Verma multiplicities of projective objects in category $\CO$. In the next section, we recall the necessary results. 

\subsection{Category $\CO$, projectives and BGG-reciprocity}

The category $\CO$ associated with $\fg$ is the full subcategory of the category of all $\fg$-modules that contains all $M$ that satisfy the following properties.
\begin{itemize}
\item The $\fh$-action on $M$ is diagonalizable.
\item  The $\fb:=\fn\oplus\fh$-action on $M$ is locally finite.
\end{itemize}
For any $\lambda\in\fhd$, the Verma module $\Delta(\lambda)$ and its irreducible quotient $L(\lambda)$ are objects in $\CO$.

Let $\CJ$ be a subset of $\fhd$. We say that it is {\em open}, if $\lambda\in\CJ$ and $\mu\in\fhd$ with $\mu\le\lambda$ imply $\mu\in\CJ$. An open subset $\CJ$ is called {\em locally bounded}, if for all $\lambda\in\CJ$ the set $\{\mu\in\CJ\mid \lambda\le\mu\}$ is finite.

For an open subset $\CJ$ we define the {\em truncated category} $\CO^\CJ$ as the full subcategory of $\CO$ that contains all objects $M$ with the property  that $M_\mu\ne 0$ implies $\mu\in\CJ$. For example, $\Delta(\lambda)$ is contained in $\CO^\CJ$ if and only if $L(\lambda)$ is contained in $\CO^\CJ$ if and only if $\lambda\in\CJ$. 

\begin{theorem}\label{thm-BGG}\cite[Theorem 3.4.10]{FiePal} \footnote{Note that in Section 3.4. in \cite{FiePal}, certain references to  previously published results are wrong. In Proposition 3.4.8 in loc.cit. the reference should be to [6, Proposition 2.1], and in Proposition 3.4.9 in loc.cit. the reference should be to [6, Lemma 2.3].} Suppose that $\CJ$ is open and locally bounded. Then for any $\lambda\in\CJ$ there exists a projective cover $P^\CJ(\lambda)$ of $L(\lambda)$ in $\CO^{\CJ}$. It admits a Verma flag and for the multiplicities the BGG-reciprocity holds:
$$
(P^{\CJ}(\lambda):\Delta(\mu))=[\Delta(\mu):L(\lambda)]
$$
for all $\lambda,\mu\in\fhd$.
\end{theorem}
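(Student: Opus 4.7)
My plan is to construct $P^\CJ(\lambda)$ by an iterative extension procedure exploiting the finiteness of $\CJ^{\ge \lambda}:=\{\mu\in\CJ\mid \mu\ge\lambda\}$ guaranteed by local boundedness, and then to derive BGG-reciprocity via a standard duality argument on $\CO^\CJ$. Since $\CJ$ is downward closed and the weights of $\Delta(\mu)$ lie below $\mu$, every $\Delta(\mu)$ with $\mu\in\CJ$ is an object of $\CO^\CJ$. The central vanishing I would invoke is that $\Ext^1_{\CO}(\Delta(\mu),L(\nu))\ne 0$ forces $\nu>\mu$; this follows from analyzing $\fn$-stable vectors in an extension.

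\textbf{Construction of the projective cover.} Starting from $P_0:=\Delta(\lambda)$, I extend successively: as long as some $\mu\in\CJ$ satisfies $\Ext^1_{\CO^\CJ}(P_i,L(\mu))\ne 0$, necessarily $\mu\in\CJ^{>\lambda}$ by the vanishing above, and one can form a short exact sequence $0\to \Delta(\mu)\to P_{i+1}\to P_i\to 0$ absorbing a nonzero Ext class (using that $\Hom(\Delta(\mu),-)$ detects highest weight vectors). Finiteness of $\CJ^{\ge\lambda}$ together with a careful bookkeeping of a total Ext-invariant forces termination at some module $P$ equipped with a Verma flag whose factors lie in $\{\Delta(\mu)\mid \mu\in\CJ^{\ge\lambda}\}$ and for which $\Ext^1_{\CO^\CJ}(P,L(\mu))=0$ for every $\mu\in\CJ$; hence $P$ is projective in $\CO^\CJ$. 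Stripping off summands that cover $L(\mu)$ for $\mu\ne\lambda$ yields $P^\CJ(\lambda)$; the standard fact that direct summands of Verma-flagged modules are Verma-flagged (consequence of the Ext-orthogonality below) ensures that $P^\CJ(\lambda)$ inherits a Verma flag.

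\textbf{BGG-reciprocity.} I would introduce the usual duality $d$ on $\CO^\CJ$ obtained by taking the graded $\fh$-dual twisted by the Chevalley involution. Because $\CJ$ is open, $d$ preserves $\CO^\CJ$, fixes each $L(\nu)$, and sends $\Delta(\mu)$ to a dual Verma $\nabla(\mu)$ with $\ch\nabla(\mu)=\ch\Delta(\mu)$. The crucial orthogonality
$$
\Ext^i_{\CO^\CJ}(\Delta(\mu),\nabla(\nu))=\delta_{i,0}\delta_{\mu,\nu}\DC
$$
is then established by induction on the length of a weight filtration, using that $\Hom$ picks out highest weight vectors and that higher Ext's vanish against co-Vermas. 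Combining these ingredients with the Verma flag of $P^\CJ(\lambda)$ gives
$$
(P^\CJ(\lambda):\Delta(\mu))=\dim\Hom_{\CO^\CJ}(P^\CJ(\lambda),\nabla(\mu))=[\nabla(\mu):L(\lambda)]=[\Delta(\mu):L(\lambda)],
$$
where the first equality uses the Verma flag and Ext-orthogonality, the second that $P^\CJ(\lambda)$ is the projective cover of $L(\lambda)$, and the third the character equality $\ch\nabla(\mu)=\ch\Delta(\mu)$.

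\textbf{Main obstacle.} The principal technical difficulty is to carry out all $\Ext$-computations inside the truncated category $\CO^\CJ$ rather than in the full $\CO$: one must check that the dual Vermas $\nabla(\mu)$ actually lie in $\CO^\CJ$, that the iterative extension process terminates (which is precisely where local boundedness of $\CJ$ becomes indispensable), and that $\Ext^1_{\CO^\CJ}$ and $\Ext^1_{\CO}$ agree on the pairs of objects considered. Once these compatibilities are verified, the remaining steps follow the familiar finite-dimensional BGG template.
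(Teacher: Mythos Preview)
The paper does not supply its own proof here: it cites \cite[Theorem~3.4.10]{FiePal} and only adds the remark that, although that reference works under a standing hypothesis that $\fg$ be finite dimensional, the argument nowhere uses this and hence carries over verbatim to the Kac--Moody setting. Your sketch is the standard BGG-style argument (iterated extensions to build a Verma-flagged projective, then $(\Delta,\nabla)$-orthogonality together with self-duality of simples to obtain reciprocity), which is exactly the template that \cite{FiePal} follows; so there is little to compare beyond noting that you have spelled out what the paper chose to outsource.

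One point worth tightening: the termination of your extension procedure is best arranged by treating the weights of $\CJ^{\ge\lambda}$ in a linear order refining the opposite of $\le$ (largest weights first), since extending by $\Delta(\mu)$ can only create new $\Ext^1$-classes against $L(\nu)$ with $\nu>\mu$; invoking an unspecified ``total Ext-invariant'' does not by itself guarantee the process stops. You also implicitly use that each $\Ext^1_{\CO^\CJ}(\Delta(\nu),L(\mu))$ is finite dimensional, which is true but deserves a word in the infinite-dimensional setting. These are precisely the compatibility checks you flag in your final paragraph, and once they are made precise the argument goes through.
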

In fact, the above is proven in \cite{FiePal} under the assumption (the general assumption in that article) that $\fg$ is finite dimensional. The proof, however, does not use finite dimensionality.

\subsection{The relation to moment graph sheaves}

The main result that allows us to link the topology of Kashiwara's flag scheme to the representation theory of $\fg$ is the following.

\begin{theorem}\label{thm-repmomgra} Suppose that $\lambda$ is non-critical, integral, regular and anti-dominant. Let $\CJ\in\fhd$ be open and locally bounded. Let  $w,x\in\CW$ and assume that $w.\lambda,x.\lambda\in\CJ$. Then
$$
(P^\CJ(w.\lambda):\Delta(x.\lambda))=\rk_S \,\SB^\vee(w)^x,
$$
where $\SB^\vee(w)$ denotes the  Braden--MacPherson sheaf associated with $w$ on the moment graph $\CG^\vee$ for the Langlands dual root datum. 
\end{theorem}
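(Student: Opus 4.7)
The plan is to build a version of Soergel's structure functor that takes projectives in $\CO^\CJ$ (for $\fg$) to global sections of Braden--MacPherson sheaves on the Langlands dual moment graph $\CG^\vee$, so that Verma multiplicities become generic ranks at vertices. Set $\CJ_\lambda := \{y \in \CW \mid y.\lambda \in \CJ\}$; the hypotheses on $\lambda$ imply this is an order ideal in $(\CW,\le)$ and carries the subgraph $\CG^\vee(\CJ_\lambda)$ of $\CG^\vee$. Because $\lambda$ is antidominant, non-critical, integral and regular, $\lambda$ itself is the unique minimal element of its dot-orbit inside $\CJ$, so $\Delta(\lambda) = L(\lambda)$ is simple projective in $\CO^\CJ$. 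I would then set $\bV^\CJ := \Hom_{\CO^\CJ}(P,-)$ for a suitable projective generator $P$ of the block, and identify $\End_{\CO^\CJ}(P)$ with the structure algebra $\CZ^\vee(\CJ_\lambda)$ via an \emph{Endomorphismensatz} in the spirit of Soergel, adapted by Fiebig to the Kac--Moody antidominant setting.

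Two calculations on standard objects then control the multiplicity. First, $\bV^\CJ$ is exact on the subcategory of modules admitting a Verma flag, because $\Delta(\lambda)$ itself is a summand of $P$ and the standard $\Ext^1$-vanishing between Vermas in this regime reduces exactness to the flag case. Second, $\bV^\CJ(\Delta(x.\lambda))$, viewed through its generic decomposition as in Remark \ref{rem-degfromglobsec}, is concentrated in the single coordinate $x \in \CJ_\lambda$ with generic rank one. Applying $\bV^\CJ$ to a Verma flag of $P^\CJ(w.\lambda)$ (Theorem \ref{thm-BGG}) therefore gives, for every $x$,
$$
(P^\CJ(w.\lambda) : \Delta(x.\lambda)) \;=\; \dim_Q \bigl( \bV^\CJ(P^\CJ(w.\lambda)) \otimes_S Q \bigr)^x.
$$

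The core step is to identify the $\CZ^\vee(\CJ_\lambda)$-module $\bV^\CJ(P^\CJ(w.\lambda))$ with $\Gamma(\SB^\vee(w))$. To this end I would associate to $\bV^\CJ(P^\CJ(w.\lambda))$ a sheaf $\SM(w)$ on $\CG^\vee(\CJ_\lambda)$ whose vertex stalks are the generic coordinate components and whose edge stalks are obtained by reduction modulo the corresponding label. I then verify that $\SM(w)$ satisfies the four properties characterizing $\SB^\vee(w)$ in Section \ref{sec-BMPsheaves}: indecomposability (inherited from $P^\CJ(w.\lambda)$), support in $\{\ge w\}$ and graded freeness of vertex stalks (from the Verma flag with heights $\ge w.\lambda$), and surjectivity of global sections onto vertex stalks (from projectivity of $P^\CJ(w.\lambda)$ in $\CO^\CJ$). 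Uniqueness of the Braden--MacPherson sheaf forces $\SM(w) \cong \SB^\vee(w)$, whence $\bV^\CJ(P^\CJ(w.\lambda)) \cong \Gamma(\SB^\vee(w))$. Combining this with the preceding display and with the surjection $\Gamma(\SB^\vee(w)) \twoheadrightarrow \SB^\vee(w)^x$ from property (4) yields the desired equality $(P^\CJ(w.\lambda) : \Delta(x.\lambda)) = \rk_S \SB^\vee(w)^x$.

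The main obstacle lies in establishing the local-to-global (surjectivity) axiom for $\SM(w)$. In Fiebig's positive-level setting this is handled via deformations of $\CO$ and translation/wall-crossing functors compatible with the moment-graph picture. In the antidominant case one has to set up the analogous translations on the opposite side of the alcove picture, which is the algebraic counterpart of replacing $G/B$ by Kashiwara's $G/B^-$ and reversing the partial order on $\CG^\vee$; this is what distinguishes the present argument from \cite{F07}. Once the surjectivity is in hand, the remaining verifications of the BMP axioms for $\SM(w)$ proceed by induction on the Bruhat order on $\CJ_\lambda$.
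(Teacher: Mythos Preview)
Your overall architecture --- a Soergel-type structure functor taking projectives to global sections of Braden--MacPherson sheaves, then reading off Verma multiplicities as stalk ranks --- is the same as the paper's. But there is a genuine gap: you work entirely in the \emph{non-deformed} category $\CO^\CJ$, whereas the argument requires the deformed category $\tilde\CO^\CJ$ over the localization $\tilde S^\vee$ of $S^\vee=S(\fh)$ at its maximal graded ideal. This is not cosmetic. In the non-deformed setting, $\Hom_{\CO^\CJ}(P,M)$ is a finite-dimensional $\DC$-vector space with no $S^\vee$-module structure, so there is no generic decomposition $\cdot\otimes_S Q$ to invoke and no meaning for ``$\rk_S$''. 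Correspondingly, $\End_{\CO^\CJ}(P^\CJ(\lambda))$ is a finite-dimensional $\DC$-algebra (a coinvariant-type quotient of $S^\vee$), not the structure algebra $\CZ^\vee(\CJ_\lambda)$, which is free over $S^\vee$ of rank $|\CJ_\lambda|$; your proposed Endomorphismensatz identification cannot hold as stated. Only after deforming does the center of the truncated block become $\CZ^\vee(\CJ_\lambda)\otimes_{S^\vee}\tilde S^\vee$ (this is \cite[Theorem~3.6]{F03}, as the paper quotes), giving $\tilde\DV^\CJ=\Hom(\tilde P^\CJ(\lambda),\cdot)$ its target in genuine $\tilde S^\vee$-modules with a canonical decomposition over the quotient field. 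The paper then imports the key isomorphism $\tilde\DV^\CJ\tilde P^\CJ(w.\lambda)\cong\Gamma(\CJ',\SB^\vee(w))\otimes_{S^\vee}\tilde S^\vee$ from \cite[Remark~7.6]{F07} rather than re-checking the BMP axioms by hand; your proposed verification of those axioms would in any case have to take place on the deformed side.

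A smaller slip in the same direction: $\Delta(\lambda)=L(\lambda)$ is \emph{not} projective in $\CO^\CJ$. By BGG reciprocity $(P^\CJ(\lambda):\Delta(\nu))=[\Delta(\nu):L(\lambda)]\ge 1$ for every $\nu\in\Lambda^\CJ$, so $P^\CJ(\lambda)$ is the big projective containing every Verma in the truncated block, not the simple Verma. This is in fact precisely the object one wants to Hom out of, but the sentence as written is false.
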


\begin{proof} The proof follows closely the proof of the analogous statement in \cite{F07}, where the dominant case is treated (in which case projective objects exist in the non-truncated block of category $\CO$). We only need to rewrite the main arguments using the  truncated category. 

First, we need  a {\em deformed version} of the main objects. As a reference for the following constructions, the reader might consult the article \cite{FiePal}. Denote by ${\tS^\vee}$ the localization of $S^\vee:=S(\fh)$, the symmetric algebra of the vector space $\fh$, at the maximal ideal $S(\fh)\fh$. Then the deformed version $\tilde\CO$ of category $\CO$ is a full subcategory of the category of all $\fg$-${\tS^\vee}$-bimodules, and it contains for any $\lambda\in\fhd$ a deformed Verma module $\tDelta(\lambda)$. This is free as a ${\tS^\vee}$-module, and it satisfies $\tDelta(\lambda)\otimes_{{\tS^\vee}}\DC\cong\Delta(\lambda)$. Likewise, there is a deformed truncated projective $\tP^\CJ(\lambda)$ in the truncated category $\tilde\CO^\CJ$ for any locally bounded open subset $\CJ$ of $\fhd$ and $\lambda\in\CJ$. This object admits a deformed Verma flag, and for the multiplicities we have $(\tP^\CJ(\lambda):\tDelta(\mu))=(P^\CJ(\lambda):\Delta(\mu))$. 

Now 
denote by $\tCO_\Lambda$ the  block of the category $\tCO$ that contains the deformed Verma module $\tDelta(\lambda)$. We identify the index $\Lambda$ with the set of all $\mu$ such that $\tDelta(\mu)$ is contained in $\Lambda$. Then  $\Lambda=\CW.\lambda$. We denote by $\tCO_\Lambda^\CJ=\tCO_\Lambda\cap\tCO^\CJ$ the truncated subcategory. Then $\tDelta(\mu)$ is contained in $\tCO^{\CJ}_\Lambda$ if and only if $\mu\in\Lambda^\CJ:=\Lambda\cap\CJ$. Note that since $\CJ$ is supposed to be locally bounded and since $\Lambda$ contains a smallest element, the set $\Lambda^\CJ$ is finite. 

Now let  $\tZ^\CJ_\Lambda$ be the {\em center} of $\tCO^\CJ_\Lambda$, i.e. the endomorphism ring of the identity functor on $\tCO^\CJ_\Lambda$. By \cite[Theorem 3.6]{F03} we have an isomorphism
$$
\tZ^\CJ_\Lambda\cong\left\{\{t_\nu\}\in\bigoplus_{\nu\in\Lambda^\CJ}{\tS^\vee}\left|\begin{matrix} t_\nu\equiv t_{s_\alpha.\nu}\mod\alpha^\vee \\\text{ for all $\nu\in\Lambda^\CJ$, $\alpha\in R^{+}\cap R^{re}$}\\ \text{ with $s_\alpha.\nu\in\Lambda^\CJ$} \end{matrix}\right\}\right..
$$
This isomorphism is normalized in such a way that the element $z=(t_\nu)\in\tZ^\CJ_\Lambda$ acts on $\tDelta(\nu)$, for $\nu\in\Lambda^\CJ$, as multiplication with the scalar $t_\nu\in{\tS^\vee}$. 

Let $\CJ^\prime$ be the set of all $x\in\CW$ with the property $x.\lambda\in\Lambda^\CJ$. Then $\CJ^\prime$ is an open subset of $\CW$ and the map $\CJ^\prime\to\Lambda^\CJ$, $x\mapsto x.\lambda$, is compatible with partial orders, and a bijection.  The definition of the structure algebra $Z^\vee(\CJ^\prime)$ of the moment graph $\CG^\vee({\CJ^\prime})$ (for the dual root system), introduced in Section \ref{sec-assocmomgra}, yields an identification 
$$
\tZ_\Lambda^{\CJ}\cong \CZ^\vee(\CJ^\prime)\otimes_{S^\vee}{\tS^\vee}.
$$

Now  consider the functor $\tDV^\CJ=\Hom_{\tCO_\Lambda^\CJ}(\tP^\CJ(\lambda),\cdot)\colon\tCO^{\CJ}\to\tZ^\CJ_{\Lambda}\catmod$. Here, for any object $M$ of $\tCO_\Lambda^\CJ$ we consider $\Hom_{\tCO_\Lambda^\CJ}(\tP^\CJ(\lambda),M)$ as a $\tZ_\Lambda^\CJ$-module via the canonical action of $\tZ_\Lambda^\CJ$ on $\tP^\CJ(\lambda)$.  Denote by ${\tQ^{\vee}}$ the quotient field of ${\tS^\vee}$.  The inclusion $\tZ^{\CJ}_\Lambda\subset\bigoplus_{\nu\in\Lambda^\CJ}{\tS^\vee}$ becomes a bijection after applying the functor $\cdot\otimes_{\tS^\vee}{\tQ^{\vee}}$ (as $\Lambda^\CJ$ is finite). So for any $\tZ^\CJ_\Lambda$-module $M$ we obtain a canonial decomposition
$$
M\otimes_{\tS^\vee} {\tQ^{\vee}}=\bigoplus_{\nu\in\Lambda^\CJ} (M\otimes_{\tS^\vee}{\tQ^{\vee}})^\nu,
$$
which has the property that $z=(t_\nu)$ acts on $(M\otimes_{\tS^\vee}{\tQ^{\vee}})^\nu$ as multiplication with $t_\nu$. 

If $M$ is an object in $\tCO_\Lambda^\CJ$ that admits a Verma flag (i.e., a filtration with subquotients isomorphic to various deformed Verma modules), then $\tilde\DV^\CJ M$ admits a Verma flag as well (for a definition, see Section 4 in \cite{F07}). By the formula for the action of the center on the deformed Verma modules we have
$$
(M:\tDelta(x.\lambda))=\dim_{{\tQ^{\vee}}} (\tilde\DV^{\CJ}(M)\otimes_{\tS^\vee}{\tQ^{\vee}})^x.
$$

Finally, the main step in the proof of the Theorem was already done in \cite{F07}.  By Remark 7.6. in \cite{F07}, there is an isomorphism 
$$
\tDV^\CJ \tP^\CJ(w.\lambda)\cong \Gamma(\CJ^\prime,\SB^\vee(w))\otimes_{S^\vee}{\tS^\vee}
$$
of $\CZ^\vee(\CJ^\prime)
\otimes_{S^\vee}{\tS^\vee}$-modules. 
From the  above we obtain an isomorphism
$$
(\tDV^\CJ \tP^{\CJ}(w.\lambda)\otimes_{\tS^\vee}{\tQ^{\vee}})^x=(\Gamma(\CJ^\prime, \SB^\vee(w))\otimes_{S^\vee}{\tQ^{\vee}})^x
$$
for all $x\in\CJ^\prime$.
Taking everything together we get
\begin{align*}
(P^{\CJ}(w.\lambda):\Delta(x.\lambda))&=(\tP^{\CJ}(w.\lambda):\tDelta(x.\lambda))\\
&=\dim_Q \tilde\DV^\CJ(\tP^{\CJ}(w.\lambda)\otimes_{\tilde S^\vee}\tilde Q^\vee)^x\\
&=(\Gamma(\CJ^\prime, \SB^\vee(w))\otimes_{S^\vee}{\tQ^{\vee}})^x\\
&=\rk\, \SB^\vee(w)^x.
\end{align*}
For the last step we refer to the characterization of Braden--MacPherson sheaves. 
\end{proof}

\subsection{A proof of Conjecture \ref{conj-KL}}

We can now collect all results and give a proof of the antidominant case of the Kazhdan--Lusztig conjectures. Note that the hardest part in the proof below probably is the result in Theorem \ref{thm-KLforIC}, that we quote. 

\begin{theorem} Conjecture \ref{conj-KL} is true.
\end{theorem}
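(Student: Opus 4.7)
The plan is to prove the equivalent reformulation Conjecture \ref{conj-KLmult}, which expresses the Jordan--Hölder multiplicities $[\Delta(x.\lambda):L(w.\lambda)]$ as $Q_{x,w}(1)$. The passage between the two conjectures is the standard Möbius inversion for Kazhdan--Lusztig polynomials, so it suffices to establish the multiplicity statement for each pair $x,w\in\CW$. Fix such $x,w$ and choose a finite downward-closed subset $\CJ^\prime\subset\CW$ with $\{y\in\CW\mid y\le w\}\cup\{x\}\subseteq\CJ^\prime$. Correspondingly, choose a locally bounded open subset $\CJ\subset\fhd$ containing $\{y.\lambda\mid y\in\CJ^\prime\}$, so that $P^\CJ(w.\lambda)\in\CO^\CJ$ exists by Theorem \ref{thm-BGG} and admits a Verma flag.

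The core of the argument is then a chain of equalities. By BGG-reciprocity (Theorem \ref{thm-BGG}) we have
$$[\Delta(x.\lambda):L(w.\lambda)]=(P^\CJ(w.\lambda):\Delta(x.\lambda)).$$
By Theorem \ref{thm-repmomgra} this multiplicity equals $\rk_S\SB^\vee(w)^x$, the rank of the Braden--MacPherson sheaf on the moment graph $\CG^\vee$ of the Langlands dual root datum. Now the moment graph $\CG^\vee(\CJ^\prime)$ coincides with the moment graph of Kashiwara's flag scheme $\fX^\vee$ attached to $\fg^\vee$, so Theorem \ref{thm-ICBMP} identifies $\SB^\vee(w)$ (up to a grading shift) with $\DW(\IC_{\CJ^\prime,w}^\vee)$, the localization of the equivariant IC-sheaf on the Schubert variety $Y^{\vee,w}(\CJ^\prime)$ inside $\fX^\vee$.

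With this identification in hand, Remark \ref{rem-degfromglobsec} yields
$$\rk_S\SB^\vee(w)^x=\dim_Q(\DW(\IC_{\CJ^\prime,w}^\vee)\otimes_S Q)^x=\rk_S\DH^\bullet_T(i_x^\ast\IC_{\CJ^\prime,w}^\vee).$$
Theorem \ref{thm-KLforIC} applied to the Langlands dual side shows that the stalk $i_x^\ast\IC_{\CJ^\prime,w}^\vee$ has vanishing odd cohomology, hence is equivariantly formal, so its equivariant cohomology is a free $S$-module whose rank equals the total Betti number $\sum_j\dim\DH^{2j}(i_x^\ast\IC_{\CJ^\prime,w}^\vee)=Q_{x,w}(1)$. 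Concatenating these equalities gives $[\Delta(x.\lambda):L(w.\lambda)]=Q_{x,w}(1)$, which proves Conjecture \ref{conj-KLmult} and therefore Conjecture \ref{conj-KL}.

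The hard part is not this final assembly, which is essentially a bookkeeping exercise once the previous sections are in place. The genuine obstacles are the inputs already quoted: Theorem \ref{thm-KLforIC} (the Kashiwara--Tanisaki computation of IC-stalks on $\fX^\vee$), Theorem \ref{thm-ICBMP} (the Braden--MacPherson-type localization in the infinite-dimensional, non-ind scheme setting of Kashiwara's flag manifold, which required the approximation machinery of Section 3 and the $1$-skeleton analysis of Proposition \ref{prop-skeleton}), and Theorem \ref{thm-repmomgra} (the extension of Soergel's structure functor to the truncated antidominant blocks of $\CO$). One must also verify that the indexing bijection $\CJ^\prime\leftrightarrow\Lambda\cap\CJ$ in the proof of Theorem \ref{thm-repmomgra} is order-preserving (so that the open subsets on both sides match), which uses that $\lambda$ is antidominant so the dot-orbit order on $\CW.\lambda$ agrees with the Bruhat order on $\CW$.
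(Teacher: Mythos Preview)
Your proposal is correct and follows essentially the same route as the paper's proof: reduce to Conjecture~\ref{conj-KLmult}, apply BGG-reciprocity (Theorem~\ref{thm-BGG}), pass to Braden--MacPherson ranks via Theorem~\ref{thm-repmomgra}, identify these with IC-stalk ranks via Theorem~\ref{thm-ICBMP} and Remark~\ref{rem-degfromglobsec}, and finish with the Kashiwara--Tanisaki computation of Theorem~\ref{thm-KLforIC}. You are slightly more explicit than the paper about the passage to the Langlands dual flag scheme and about the equivariant formality step, but the argument is the same.
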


\begin{proof} As already mentioned, Conjecture \ref{conj-KL} is equivalent to Conjecture \ref{conj-KLmult}, which is really just stating the inverse formula. By Theorem \ref{thm-BGG} we have
$$
[\Delta(x.\lambda):L(y.\lambda)]=(P^\CJ(y.\lambda):\Delta(x.\lambda)),
$$
where $\CJ\subset \fhd$ is open, locally bounded, and contains $x.\lambda$ and $y.\lambda$. Theorem \ref{thm-repmomgra} now shows that 
$$
(P^\CJ(y.\lambda):\Delta(x.\lambda))=\rk_S\,\SB^\vee(y)^x.
$$
Theorem \ref{thm-ICBMP} (together with Remark \ref{rem-degfromglobsec}) yields
$$
\rk_S\,\SB^\vee(y)^x=\rk_S\,\DH_T(i^\ast_x\SF),
$$
where $\SF$ is the equivariant IC-sheaf on $Y^y(\CJ)$. Now $\rk_S\,\DH_T(i^\ast_x\SF)=\dim_\DC \DH_T(i^\ast_x \IC_{\CJ,y})$ by formality. Finally, by Theorem \ref{thm-KLforIC}, 
$$
\dim_\DC \DH_T(i_x^\ast\IC_{\CJ,y})=Q_{x,y}(1).
$$
\end{proof}
 
\end{document}